\newtheorem{theorem}{Theorem}[section]
\newtheorem{lemma}[theorem]{Lemma}
\newtheorem{corollary}{Corollary}[theorem]
\newtheorem{proposition}[theorem]{Proposition}
\theoremstyle{remark}
\newtheorem{remark}{Remark}
\title{Spectral properties of the non-backtracking matrix of a graph}
\author{Cory Glover\footnote{Department of Mathematics, Brigham Young University, Provo UT, USA, cory.s.glover@gmail.com} ~and Mark Kempton\footnote{Department of Mathematics, Brigham Young University, Provo UT, USA, mkempton@mathematics.byu.edu}}
\date{}
\begin{document}
\maketitle

\begin{abstract}
    We investigate the spectrum of the non-backtracking matrix of a graph.  In particular, we show how to obtain eigenvectors of the non-backtracking matrix in terms of eigenvectors of a smaller matrix.  Furthermore, we find an expression for the eigenvalues of the non-backtracking matrix in terms of eigenvalues of the adjacency matrix, and use this to upper-bound the spectral radius of the non-backtracking matrix, and to give a lower bound on the spectrum.  We also investigate properties of a graph that can be determined by the spectrum.  Specifically, we prove that the number of components, the number of degree 1 vertices, and whether or not the graph is bipartite are all determined by the spectrum of the non-backtracking matrix.
\end{abstract}

\section{Introduction}
Spectral techniques are ubiquitous in the study of random walks on graphs.  For instance, it is well known that the adjacency matrix of a graph can be used to enumerate walks, and eigenvalues of the transition probability matrix can be used to bound the mixing rate and mixing time of a simple random walk.  In recent years, the study of \emph{non-backtracking} random walks has gained considerable interest.  A non-backtracking random walk is a random walk on a graph with the added requirement that each step cannot travel to the vertex visited on the immediate previous step.  Just as the adjacency matrix can be used to enumerate simple walks, a matrix called the \emph{non-backtracking matrix} of a graph can be defined, which can be used to enumerate non-backtracking walks in the graph (see Section \ref{sec:nbmatrix} for details).  The non-backtracking matrix and its spectral properties have been the object of considerable study recently.
Bordenave et.~al.~\cite{7354460} and. Newman \cite{newman2013spectral} used them for community detection on a graph.
Centrality measures using non-backtracking random walks have been discussed by Arrigo et al and Lin and Zhang \cite{arrigo2018non,lin2019non}.
Pan, Jiang and Xu have used them to maximize influence on social networks \cite{7866135}.

In this paper, we study the spectral properties of the non-backtracking matrix of a graph.
It is well-known that the spectrum of a non-backtracking matrix is connected to the spectrum of the adjacency matrix using Ihara's Theorem \cite{alon2007non,kempton2016,lubetzky2016cutoff,torres2020non} (see \ref{thm:ihara}).
Specifically, many have noted the relationship between the non-backtracking spectrum and a (usually) smaller matrix we will denote $K$ (see Section 4) \cite{krzakala2013spectral,lin2019non,torres2020non}.
Bordenave et al also have studied non-backtracking spectrum of Erdos-Renyi random graphs \cite{7354460}.

The spectrum of the non-backtracking matrix is well understood for regular graphs, with explicit expressions for eigenvalues from Ihara's Theorem, and work of Lubetsky and Peres \cite{lubetzky2016cutoff} gives explicit constructions of eigenvectors for regular graphs.
Our main goals are to: 1) develop deeper understanding into the eigenvalues and eigenvectors of the non-backtracking matrix, especially for irregular graphs, 2) identify bounds on the non-backtracking spectrum of a graph, and 3) relate spectral properties of the non-backtracking matrix to structural properties of the graph.
Our first main result is to give a new proof of Ihara's Theorem. Our approach not only tells us the spectrum of the non-backtracking matrix, but also gives a decomposition of the non-backtracking matrix from which eigenvectors can be determined in terms of eigenvectors of a smaller matrix.
We furthermore use eigenvectors of the matrix $K$ mentioned above to
develop a formula that (under the right conditions) gives the eigenvalues of the non-backtracking matrix in terms of eigenvalues of the adjacency matrix for irregular graphs.
Using this expression, we 
bound the spectral radius of the non-backtracking matrix.  Finally, we prove that various structural properties of the graph, such as number of components, number of degree 1 vertices, and bipartiteness are completely determined by the non-backtracking spectrum. 

The remainder of the paper is organized as follows. In Section 2, we will define the non-backtracking matrix and review key facts known about its spectrum.  In Section 3, we will investigate the non-backtracking spectrum of trees and cycles, as well as graphs with pendant cycles.
In Section 4, we will relate the non-backtracking matrix to a block diagonal matrix.
This block diagonal construction will give our alternate proof to Ihara's Theorem.
Lastly, Section 5 will discuss upper bounds on the non-backtracking spectrum in terms of the spectral radius of the adjacency matrix.
Additionally, we will derive a lower bound on the minimum modulus of the non-backtracking spectrum.
Finally, we will identify properties of the non-backtracking spectrum of bipartite graphs.

\section{The Non-Backtracking Matrix}\label{sec:nbmatrix}
Let $G=(V,E)$ be a graph with vertices $V$ and edges $E$.
Let $n$ and $m$ be the number of vertexs and edges of $G$ respectively.
Given a starting vertex on $G$, a sample random walk of length $k$ is a collection of vertices $(v_1,...,v_k)$, where $v_i$ is chosen uniformly at random from the neighbors of $v_{i-1}$.
A non-backtracking random walk (NBRW) is a walk where $v_i$ is chosen uniformly from the set of neighbors to $v_{i-1}$ excluding $v_{i-2}$, for all $i>2$.
In order to consider a NBRW as a Markov chain, we consider an equivalent random walk along the edges of the directed graph $\hat{G}$.
We define $\hat{G}$ to have a directed edge from $i\rightarrow j$ and $j\rightarrow i$ if $i\sim j$ in $G$, and that $i\rightarrow j$ only connects to an edge $k\rightarrow l$ if $l\neq i$ and $j=k$ (see Figure \ref{fig:cycle}).
To encapsulate this Markov chain in a matrix, we define the non-backtracking matrix $B$ of $G$ such that
\begin{equation}
    B((u,v),(x,y))=\begin{cases}1&v=x\text{ and }u\neq y\\0&\text{otherwise}\end{cases}
    \label{eqn:nb-matrix-def}
\end{equation}
where $(u,v)$ and $(x,y)$ are edges between $u\sim v$ and $x\sim y$ respectively.
It has previously been proven that the spectrum of $B$ can be found using Ihara's Theorem (see \cite{ihara1966discrete,kotani20002}).
\begin{theorem}[Ihara's Theorem]\label{thm:ihara}
Given a graph $G$ with $n$ vertexs and $m$ edges, let $B$ be the non-backtracking matrix of $G$ as defined above. Let $A$ denote the adjacency matrix of $G$ and $D$ the degree matrix. Then
\begin{equation}
    \text{det}(I-uB)=(1-u^2)^{m-n}\text{det}(u^2(D-I)-uA+I).
    \label{eqn:char-B}
\end{equation}
\label{thm:ihara}
\end{theorem}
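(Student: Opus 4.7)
The plan is to prove the identity by the classical Schur complement argument applied to an auxiliary $(n+2m)\times(n+2m)$ block matrix. First, I would introduce three matrices encoding the directed-edge incidence structure of $\hat{G}$: the source matrix $S\in\mathbb{R}^{n\times 2m}$ with $S_{v,e}=1$ iff $v$ is the tail of $e$, the target matrix $T\in\mathbb{R}^{n\times 2m}$ with $T_{v,e}=1$ iff $v$ is the head of $e$, and the edge-reversal involution $J\in\mathbb{R}^{2m\times 2m}$ that swaps each directed edge $(u,v)$ with $(v,u)$. Direct checks yield the incidence identities $SS^{T}=D$, $ST^{T}=A$, $T=SJ$, and $J^{2}=I$, while unpacking (\ref{eqn:nb-matrix-def}) gives the key factorization $B=T^{T}S-J$. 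Since $J$ is an involutive permutation matrix with orbits of size two, its spectrum is $\{+1,-1\}$ with each eigenvalue of multiplicity $m$, so $\det(I+uJ)=(1-u^{2})^{m}$ and $(I+uJ)^{-1}=(I-uJ)/(1-u^{2})$ whenever $u^{2}\ne 1$.

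Next, I would consider the block matrix
\[
M(u) \;=\; \begin{pmatrix} I_n & uT \\ S^{T} & I_{2m}+uJ \end{pmatrix}
\]
and compute $\det M(u)$ in two ways using the Schur complement. Pivoting on the upper-left block produces $\det(I+uJ-uS^{T}T)$, and using $S^{T}T=B^{T}+J$ this collapses to $\det(I-uB^{T})=\det(I-uB)$. Pivoting on the lower-right block produces
\[
\det(I+uJ)\cdot\det\!\bigl(I_n-uT(I+uJ)^{-1}S^{T}\bigr).
\]
Substituting the closed form of $(I+uJ)^{-1}$ and applying $TS^{T}=A$ together with $(TJ)S^{T}=SS^{T}=D$, the $n\times n$ Schur complement becomes $I-u(A-uD)/(1-u^{2})$, whose determinant equals $(1-u^{2})^{-n}\det(u^{2}(D-I)-uA+I)$. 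Multiplying by $\det(I+uJ)=(1-u^{2})^{m}$ and equating the two evaluations of $\det M(u)$ yields Ihara's identity on the open set $u^{2}\ne 1$, and the identity extends to all $u$ by polynomiality in $u$.

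The main technical obstacle is choosing the block matrix so that its two Schur pivots land precisely on $I-uB$ and on the desired $n\times n$ polynomial matrix. Several natural variants of signs or transposes either introduce spurious factors of $u$ or replace $S^{T}T-J$ with $S^{T}T+J$ after pivoting, giving a valid but different determinant identity; the arrangement above is chosen so that the $-J$ in the factorization $B=T^{T}S-J$ cancels exactly against the $+uJ$ block in $M(u)$. Once the right $M(u)$ is fixed, all remaining steps are routine manipulations with the handful of incidence identities above.
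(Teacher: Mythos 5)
Your proof is correct, but it is not the route the paper takes. The paper states Ihara's Theorem with a citation and supplies its own argument only later, in Section 4: there it builds the intertwining relation $B\begin{bmatrix}S&T^T\end{bmatrix}=\begin{bmatrix}S&T^T\end{bmatrix}K$ from the identities $B=ST-\tau$, $A=TS$, $D=T\tau S$, augments $\begin{bmatrix}S&T^T\end{bmatrix}$ with bases of $\mathscr{E}_{\pm1}\cap\mathrm{Null}(ST)$ to form $X$, and reads off $\det(I-uB)$ from the block-triangular form $BX=X\,\mathrm{diag}(K,I_{m-n},-I_{m-n})$ (Theorem \ref{thm:almost-similarity} and the remark following it). Your argument is instead the classical Bass/Kotani--Sunada proof: evaluate $\det M(u)$ for $M(u)=\left(\begin{smallmatrix}I_n&uT\\S^T&I_{2m}+uJ\end{smallmatrix}\right)$ by the two complementary Schur pivots. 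Note that your $S,T$ are the transposes of the paper's, and your reversal involution $J$ is the paper's $\tau$, so your factorization $B=T^TS-J$ is the paper's $B=ST-\tau$ in transposed form; all of your incidence identities check out, and both Schur computations are correct ($S^TT=B^T+J$ kills the $uJ$ term on one side, while $TS^T=A$, $TJS^T=D$, and $\det(I+uJ)=(1-u^2)^m$ produce $(1-u^2)^{m-n}\det(u^2(D-I)-uA+I)$ on the other). The trade-off: your proof is self-contained and sidesteps the delicate points in the paper's route (the dimension counts for $\mathscr{E}_{\pm1}\cap\mathrm{Null}(ST)$ and the implicit invertibility of $X$), whereas the paper's decomposition buys explicit eigenvectors of $B$ in terms of eigenvectors of $K$, which is what the rest of the paper actually uses. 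One small point of care: when $m<n$ (forests) the right-hand side of \eqref{eqn:char-B} is a priori a rational function, so the final step is equality of rational functions agreeing on the dense set $u^2\neq1$, not strictly ``polynomiality''; this does not affect the validity of the argument.
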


\begin{figure}[h]
    \centering
    \begin{tikzpicture}
\draw[ultra thick] (-1,0)--(0,1);
\draw[ultra thick] (0,1)--(1,0);
\draw[ultra thick] (1,0)--(0,-1);
\draw[ultra thick] (0,-1)--(-1,0);
\filldraw[black] (1,0) circle (3pt);
\filldraw[black] (-1,0) circle (3pt);
\filldraw[black] (0,1) circle (3pt);
\filldraw[black] (0,-1) circle (3pt);
\draw[ultra thick] (2,0)--(3,1);
\draw[ultra thick] (3,1)--(4,0);
\draw[ultra thick] (4,0)--(3,-1);
\draw[ultra thick] (3,-1)--(2,0);
\draw[->,ultra thick,red] (4,0) arc (0:70:1);
\draw[->,ultra thick,red] (3,1) arc (90:160:1);
\draw[->,ultra thick,red] (2,0) arc (180:250:1);
\draw[->,ultra thick,red] (3,-1) arc (270:340:1);
\draw[->,ultra thick,red] (4,0) arc (90:160:1);
\draw[->,ultra thick,red] (3,1) arc (180:250:1);
\draw[->,ultra thick,red] (2,0) arc (270:340:1);
\draw[->,ultra thick,red] (3,-1) arc (0:70:1);
\filldraw[black] (3,1) circle (3pt);
\filldraw[black] (2,0) circle (3pt);
\filldraw[black] (4,0) circle (3pt);
\filldraw[black] (3,-1) circle (3pt);
\end{tikzpicture}
\caption{A NBRW along the 4-cycle can be considered a Markov chain by performing a simple random walk along the directed edges on the right graph with a non-backtracking condition.}
\label{fig:cycle}
\end{figure}
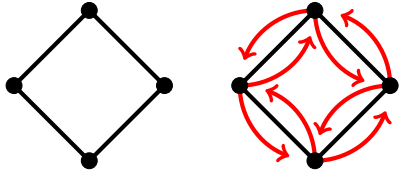

Thus the eigenvalues of $B$ all either $\pm 1$ or solutions to $\text{det}(\mu^2 I-\mu A+(D-I))=0$. Our goal is identify properties of the spectrum of $B$, denoted $\sigma(B)$, for a given graph $G$.
When $G$ is $d$-regular (each vertex has degree $d$), then Ihara's Theorem works out in a straightforward way to give the spectrum of $B$ (see \cite{alon2007non,kempton2016,lubetzky2010cutoff}).  This we state in the following theorem. Similar results have been obtained for bipartite biregular graphs \cite{kempton2016}. 
\begin{theorem}
Let $G$ be a $d$-regular graph and $A$ the adjacency matrix of $G$.
Then the eigenvalues of $B$ are
\[\pm1,\frac{\lambda_i\pm\sqrt{\lambda_i^2-4(d-1)}}{2},(i=1,...,n)\]
where $\lambda_i\in\sigma(A)$ and $\pm 1$ each have multiplicity $m-n$.
\label{kemp-reg-spectrum}
\end{theorem}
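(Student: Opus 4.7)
The plan is to specialize Ihara's Theorem (Theorem \ref{thm:ihara}) to the $d$-regular case and read the roots off the two factors on the right-hand side. Because $G$ is $d$-regular, the degree matrix collapses to $D = dI$, so $D - I = (d-1)I$ and the identity in \eqref{eqn:char-B} becomes
\[
\det(I - uB) = (1-u^2)^{m-n}\det\bigl(u^2(d-1)I - uA + I\bigr).
\]
The zeros of $\det(I - uB)$ (in the variable $u$) are precisely the reciprocals $1/\mu$ of the eigenvalues $\mu$ of $B$, counted with multiplicity, so the task reduces to locating the roots of each of the two factors and inverting.

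From the first factor $(1-u^2)^{m-n}$, we pick up $u = \pm 1$, each with multiplicity $m-n$, contributing the eigenvalues $\mu = \pm 1$ of $B$ with multiplicity $m-n$ each. For the second factor, I would diagonalize $A$: if $\lambda_1,\dots,\lambda_n$ are the eigenvalues of $A$ (with a common eigenbasis since $A$ is symmetric), then $u^2(d-1)I - uA + I$ is simultaneously diagonalized, and its determinant factors as
\[
\prod_{i=1}^{n}\bigl((d-1)u^2 - \lambda_i u + 1\bigr).
\]
Each quadratic $(d-1)u^2 - \lambda_i u + 1 = 0$ has roots $u = \frac{\lambda_i \pm \sqrt{\lambda_i^2 - 4(d-1)}}{2(d-1)}$.

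Inverting and rationalizing gives the reciprocal
\[
\mu = \frac{1}{u} = \frac{2(d-1)}{\lambda_i \pm \sqrt{\lambda_i^2-4(d-1)}} = \frac{\lambda_i \mp \sqrt{\lambda_i^2 - 4(d-1)}}{2},
\]
which is exactly the claimed expression (with the $\pm$ relabeled). A sanity check on dimension closes the argument: the first factor has $2(m-n)$ roots in $u$ and the quadratic factor has $2n$ roots, for a total of $2m$ eigenvalues, matching the size of $B$.

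The only subtle point is the multiplicity bookkeeping when some $\lambda_i$ equals $\pm 2\sqrt{d-1}$ (so the discriminant vanishes) or when $\lambda_i = \pm d$ forces one of the two quadratic roots to coincide with $\pm 1$; these are genuine coincidences of eigenvalues, not an artifact of the proof, and simply add to the multiplicities in the stated list. Since Ihara's Theorem is an identity of polynomials, this coincidence of roots automatically gives the correct multiplicity in $\sigma(B)$, so no separate case analysis is required.
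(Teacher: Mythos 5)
Your proposal is correct and is essentially the argument the paper intends: the paper presents this theorem as something that ``works out in a straightforward way'' from Ihara's Theorem, and your specialization $D=dI$, the factorization of $\det\bigl(u^2(d-1)I-uA+I\bigr)$ over an eigenbasis of $A$, and the inversion of roots (with the degree count $2(m-n)+2n=2m$, which also confirms $\det B\neq 0$ so that every eigenvalue of $B$ really is the reciprocal of a root of $\det(I-uB)$) is exactly that computation. The paper does sketch an alternative derivation later, in the remark following Proposition \ref{thm:mu-equation}, by exhibiting explicit eigenvectors $\begin{bmatrix}-\mu_i\mathbf{x}&\mathbf{x}\end{bmatrix}^T$ of the matrix $K$ for each adjacency eigenvector $\mathbf{x}$, but your route matches the one the paper actually invokes for this theorem.
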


In general it is useful to know when $B$ is irreducible.
Throughout the paper, we will use this condition to employ the Perron-Frobenius theorem.

\begin{proposition}
Let $G$ be a connected graph that is not a cycle and $d_{\min}\geq 2$.
Then $B$ is irreducible.
\label{thm:irreducible}
\end{proposition}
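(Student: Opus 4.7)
The plan is to deduce irreducibility of $B$ from the strong connectedness of the auxiliary directed graph $\hat{G}$ whose vertices are the $2m$ directed edges of $G$ and whose arcs encode the valid non-backtracking transitions; by definition $B$ is the adjacency matrix of $\hat{G}$, so $B$ is irreducible iff $\hat{G}$ is strongly connected, equivalently iff for every pair of directed edges $e_1=(u_1,v_1)$ and $e_2=(u_2,v_2)$ there is a non-backtracking walk $u_1,v_1,w_1,\ldots,w_k,u_2,v_2$ in $G$. A first useful observation is that the hypotheses force $G$ to contain some vertex $z$ of degree at least $3$, since a connected graph in which every vertex has degree exactly $2$ is necessarily a single cycle.

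The heart of the argument is the following sub-lemma: for every directed edge $(a,b)$, every vertex $c$, and every neighbor $d$ of $c$, there is a non-backtracking walk that begins with $(a,b)$ and ends at $c$ arriving from a neighbor other than $d$. Applying this with $(a,b)=e_1$, $c=u_2$, $d=v_2$ and then appending the step from $u_2$ to $v_2$ produces the non-backtracking walk from $e_1$ to $e_2$ we need. To prove the sub-lemma I would first try a shortest path $P\colon b=p_0,p_1,\ldots,p_\ell=c$ in $G$; being simple, $P$ is automatically non-backtracking, so the prolongation $(a,b),(b,p_1),\ldots,(p_{\ell-1},c)$ works outright provided $p_1\neq a$ and $p_{\ell-1}\neq d$. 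When either condition fails I would detour using $d_{\min}\geq 2$: at $b$ one may first step to a neighbor $b'\neq a$, and symmetrically at $c$ one may arrange the final approach direction. To glue the possibly-detoured initial and final segments together I would route the walk through $z$, where $\deg(z)\geq 3$ leaves at least two independent ways to both enter and leave.

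The main obstacle is that a detour at $b$ to some $b'$ may strand the walk in a component $C$ of $G-b$ that does not contain $z$ or $c$, forcing us to come back to $b$ along a different edge before continuing; this happens when $b$ is a cut vertex. The resolution is that $d_{\min}\geq 2$ guarantees non-backtracking walks extend indefinitely, and in $C$ every vertex not adjacent to $b$ has degree at least $2$ in $C$ itself, so the walk inside $C$ must eventually revisit a directed edge and hence can return to $b'$ from a fresh direction (thereby stepping back to $b$ non-backtrackingly). Iterating this idea together with the branching freedom at $z$ knits the pieces together to produce the required non-backtracking walk from $e_1$ to $e_2$.
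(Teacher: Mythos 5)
Your overall architecture is sound: reducing irreducibility of $B$ to strong connectivity of $\hat{G}$, noting that the hypotheses force a vertex $z$ of degree at least $3$, and reformulating the goal as the sub-lemma ``reach $c$ while arriving from a neighbor other than $d$'' are all valid moves, and the sub-lemma does imply the proposition. The paper organizes things differently --- it reduces to the claim that every directed edge can reach its own reversal, then argues via shortest paths returning to a vertex of degree at least $3$ --- but in both arguments the mathematical crux is the same: one must exhibit a non-backtracking walk that reverses its direction of travel.

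That crux is exactly where your proposal has a genuine gap. In the cut-vertex case you assert that, inside a dead-end component $C$ of $G-b$, the walk ``must eventually revisit a directed edge and hence can return to $b'$ from a fresh direction.'' The conclusion does not follow from the premise. A non-backtracking walk that revisits a directed edge merely contains a closed non-backtracking subwalk; it can circulate in that subwalk forever without ever returning to $b$. (Take $C$ to be a path from $b'$ leading into a triangle: the extended walk enters the triangle and cycles there, revisiting directed edges indefinitely, yet never comes back to $b$.) What is actually needed is a construction: since every vertex of $C$ has degree at least $2$ in $C\cup\{b\}$, that subgraph contains a cycle $Z$; either $Z$ passes through $b$, in which case $b$ has two neighbors in $C$ and the walk returns to $b$ by traversing $Z$, or $Z\subseteq C$, in which case you take a shortest path from $b'$ to $Z$, traverse $Z$ once, retrace the path back to $b'$ (the reverse of a non-backtracking walk is non-backtracking), and step to $b$, checking the non-backtracking condition at the two junctions. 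Without some such argument --- which is essentially the content of the paper's ``path from $i$ to $-i$'' step --- the hardest part of the proof is asserted rather than proved; the same vagueness affects your gluing step at $z$, which silently relies on the same turn-around ability.
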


\begin{proof}
Since $B$ represents a directed graph $\hat{G}$, it suffices to show that the graph represented by two directed edges between each vertex with the non-backtracking constraint is strongly connected.
Denote $i$ as the directed edge from $a\rightarrow b$ and $-i$ as the directed edge from $b\rightarrow a$ where $a$ and $b$ are vertices of the original graph $G$.
Note that if $G$ is connected, a simple random walk is irreducible on $G$.
Assume that a path exists on $\hat{G}$ between $i$ and $-i$ for every $i$ in $\hat{G}$.
Then a simple random walk across $\hat{G}$ will have the ability to backtrack after a finite number of steps.
Combining this with the connectedness of $G$ implies that $\hat{G}$ must be strongly connected.
Hence it is sufficient to show that there is a path from $i$ to $-i$ for every $i$ in $\hat{G}$.

Since $G$ is not a cycle, there must exist at least one vertex with degree greater than or equal to 3.
We examine a walk across $\hat{G}$ beginning at directed edge $i$.
Assume that $i$ is pointing towards a vertex $a$ of degree at least 3.
Assume that we take the shortest path from $i$ to some directed edge pointing towards $a$ which is not the trivial path of length 0.
If this path arrives at an edge $k$ pointing towards $a$ such that $k\neq i$, then we can take a step from $k$ to $-i$ since $G$ is connected.
If this path arrives at $i$, then take a step onto a directed edge $j$ which is not in the current path.
We are guaranteed such $j$ exists since $a$ has degree 2.
Take the shortest path from $j$ to another directed edge pointing towards $a$ that is not $j$ itself.
If this path arrives at some directed edge $l\neq i$, then we can take a step from $l$ to $-i$ since $G$ is connected.
If this path arrives at $i$, then at some point along the path from $j$ to $i$ we intersected at some edge $r$ from the first shortest path found.
When arriving at $r$, rather than continue to $i$ via $r$, step to $-r$.
Then by the existence of the path from $i$ to $r$, there must exist a path from $-r$ to $-i$.
Hence a path always exist between $i$ and $-i$.

Now assume that $i$ is pointing towards a vertex $b$ of degree 2. 
Then a random walk on $\hat{G}$ beginning at $i$ can only travel to one edge. Since $G$ is connected and $G$ is not a cycle, there exists a path from $i$ to some edge $k$ which points to a vertex of degree at least 3.
Thus, by the previous statement, a path exists between $k$ and $-k$. Then there is clearly a path from $-k$ to $-i$.
Hence there is a path from $i$ to $-i$.
Hence, $G$ is strongly connected and $B$ is irreducible.
\end{proof}


\section{Examples with few edges}

It is clear that as $G$ gets more and more dense, $B$ becomes larger.
However, when $m$ is close to $n$, $B$ is relatively small.
In this section, we examine the spectrum of $B$ specifically when $m\leq n$.

\subsection{Trees}

A tree is a connected graph which contains no cycles.
Trees have been found to be difficult to distinguish using the spectrum of $A$, since Schwenk showed that many large trees have the same spectrum \cite{schwenk1973almost}.
In the case of the non-backtracking matrix, all trees have the same spectrum.
To show this, we first find the characteristic polynomial of the edge adjacency matrix $C$ where the $G$ is a directed tree, with all edges pointing towards a root vertex and use this expression to find the non-backtracking spectrum of a tree.
We define $C$ as
\begin{equation}
   C((u,v),(x,y))=\begin{cases}1&v=x\\0&\text{otherwise}\end{cases}. 
   \label{eqn:c-matrix}
\end{equation}

\begin{remark}
The non-backtracking spectrum of a tree was first explicitly found by Torres \cite{torres2020non}.
His proof method uses properties of non-backtracking random walks.
We will use cofactor expansion to show the same result.
Additionally we will give an alternate proof to one of his corollaries using cofactor expansion.
\end{remark}

\begin{lemma}
Let $G$ be a directed tree with $n$ vertexs where all edges eventually point to one root vertex.
Let $C$ be the edge adjacency matrix of $G$.
Then $\text{det}(\lambda I-C)=\lambda^{n-1}$.
\label{thm:b-tree-lemma}
\end{lemma}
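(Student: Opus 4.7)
The plan is to prove the lemma by induction on $n$, the number of vertices. The base case $n=1$ has no edges, so $C$ is the empty matrix, $\det(\lambda I-C)=1=\lambda^{0}$, as required.

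For the inductive step, I would exploit the fact that an undirected tree on $n\geq 2$ vertices always has at least two leaves, so we can pick a leaf $v$ which is not the root. Because all edges are oriented toward the root, the single edge incident to $v$ must be the outgoing edge $e=(v,p(v))$, where $p(v)$ is the neighbor of $v$. The key observation is that the column of $C$ indexed by $e$ is entirely zero: by the definition in \eqref{eqn:c-matrix}, this column records edges $(u,w)$ with $w=v$, but no such edge exists in $G$ since $v$ is a leaf and is not the head of any edge. Consequently, the column of $\lambda I-C$ corresponding to $e$ has the single nonzero entry $\lambda$ on the diagonal.

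Expanding $\det(\lambda I-C)$ along this column then yields $\det(\lambda I-C) = \lambda\cdot\det(M)$, where $M$ is the $(n-2)\times(n-2)$ submatrix obtained by deleting the row and column indexed by $e$. The rows and columns of $M$ are indexed by the edges of the subtree $G'=G-v$, which is again a directed tree with $n-1$ vertices, rooted at the same vertex. Because the entries of $C$ depend only on whether $v=x$ for the endpoints of two edges, and this condition is unchanged by deleting $v$, the matrix $M$ is precisely $\lambda I' - C'$ where $C'$ is the edge adjacency matrix of $G'$. Applying the inductive hypothesis, $\det(M) = \lambda^{n-2}$, and therefore $\det(\lambda I - C) = \lambda\cdot\lambda^{n-2} = \lambda^{n-1}$.

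The main thing to be careful about is the choice of leaf: we must pick a non-root leaf so that $G'=G-v$ retains the root and the orientation-toward-root property, and we must confirm that the column (not the row) of $e$ is the zero one under this orientation convention—this is what makes cofactor expansion trivialize. Once the right column is identified, the rest is a clean induction with no real computational obstacle.
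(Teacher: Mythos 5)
Your proof is correct and follows essentially the same strategy as the paper's: identify a row or column of $C$ that is identically zero, so that the corresponding line of $\lambda I-C$ contains only the diagonal entry $\lambda$, and cofactor-expand inductively. The only difference is that you peel off non-root leaves using zero \emph{columns}, whereas the paper peels off the edges pointing into the root using zero \emph{rows} and then recurses on the resulting forest; both choices work, and your single-tree, leaf-pruning induction is if anything slightly cleaner to state.
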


\begin{proof}
Note that a row representing any edge pointing directly to the root vertex of $G$ will have all 0 entries.
Choose an edge $j$ pointing to the root and perform cofactor expansion across this row of $\lambda I-C$ corresponding to $j$.
This gives $\text{det}(\lambda I-C)=\lambda\text{det}(\lambda I-C_{\neq\{j\}})$, where $C_{\neq\{j\}}$ is $C$ with the row and columns for $j$ deleted. By induction, continue on every edge pointing to the root. The new $\hat{C}$ will then consist of $k$ directed trees with all edges pointing to a root. Repeat by induction. Thus, $\text{det}(\lambda I-C)=\lambda^{n-1}$.
\end{proof}

\begin{theorem}[\cite{torres2020non}]
Let $B$ be the non-backtracking matrix of a tree $G$.
Then $\text{det}(\lambda I-B)=\lambda^{2n-1}$.
\label{thm:b-tree}
\end{theorem}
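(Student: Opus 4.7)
The plan is to root $G$ at an arbitrary vertex $r$ and use the resulting parent-child hierarchy to impose a block-triangular structure on $B$. Call a directed edge $(u,v)$ of $\hat G$ \emph{downward} if $u$ is the parent of $v$, and \emph{upward} otherwise. I would then order the rows and columns of $B$ so that all downward edges precede all upward ones.

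The key observation is that from a downward edge $(u,v)$ the non-backtracking rule forbids returning to $u$, and the only remaining neighbors of $v$ are its children; hence every successor of a downward edge is again downward. Upward edges, by contrast, may be followed by either an upward edge (continuing to the parent of $v$) or a downward edge (stepping into a sibling subtree). Consequently, in the chosen ordering,
\[
B=\begin{pmatrix} B_{dd} & 0 \\ B_{ud} & B_{uu}\end{pmatrix},
\]
so $\det(\lambda I - B)=\det(\lambda I - B_{dd})\,\det(\lambda I - B_{uu})$.

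Next I would identify each diagonal block with the matrix $C$ of Lemma \ref{thm:b-tree-lemma}. Within $B_{dd}$ the non-backtracking condition $u \neq y$ is automatic whenever $v = x$, since $u$ is then a strict ancestor of $y$; hence $B_{dd}((u,v),(x,y)) = 1$ iff $v = x$, which is exactly the edge adjacency matrix of the directed tree obtained by orienting every edge of $G$ away from $r$. The analogous reasoning shows $B_{uu}$ is the edge adjacency matrix of the directed tree with every edge pointing toward $r$. Lemma \ref{thm:b-tree-lemma} gives $\det(\lambda I - B_{uu}) = \lambda^{n-1}$ directly; the mirror version of the same cofactor-expansion argument, starting from a row corresponding to an edge pointing into a leaf (which has all-zero row in $B_{dd}$), gives $\det(\lambda I - B_{dd}) = \lambda^{n-1}$. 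Multiplying produces the claimed characteristic polynomial.

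The only real conceptual step is recognizing that the parent-child orientation makes $B$ block triangular; once that is set, the rest is routine bookkeeping plus an application of Lemma \ref{thm:b-tree-lemma}. I do not anticipate any serious obstacle beyond verifying that the non-backtracking condition collapses to the single equation $v = x$ within each diagonal block, which follows from the acyclicity of $G$.
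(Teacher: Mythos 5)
Your proof is correct, but it is organized differently from the paper's. The paper proves the result by iterated cofactor expansion: rows of $\lambda I-B$ corresponding to edges pointing into leaves are (up to the diagonal) zero, so it peels those off, then peels off edges into parents of leaves, and so on, until what remains is the edge adjacency matrix of a directed tree oriented toward a root, at which point Lemma \ref{thm:b-tree-lemma} finishes the computation. You instead root the tree once, observe that the non-backtracking successor relation never takes a downward (parent-to-child) edge to an upward one, and thereby exhibit $B$ as block lower triangular with the two diagonal blocks equal to the edge adjacency matrices of the tree oriented away from and toward the root; each block contributes $\lambda^{n-1}$ by Lemma \ref{thm:b-tree-lemma} and its mirror (equivalently, the away-from-root matrix is, under edge reversal, the transpose of the toward-root one, so it has the same characteristic polynomial). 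Your verification that the condition $u\neq y$ is automatic within each block is the right thing to check and is correct in a tree. What your version buys is a single global structural statement in place of the paper's inductive peeling, which as written glosses over the order in which rows must be removed (an edge into the parent of a leaf need not have only its diagonal entry remaining unless all of that parent's other descendants have already been processed); your block decomposition sidesteps that bookkeeping entirely. One small point: both your argument and the paper's yield $\det(\lambda I-B)=\lambda^{2(n-1)}=\lambda^{2n-2}$, which is forced since $B$ is a $(2n-2)\times(2n-2)$ matrix; the exponent $2n-1$ in the theorem statement is a typo, not a discrepancy with your proof.
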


\begin{proof}
Let $G$ be a graph with $n$ vertexs and $k$ leaves.
Let the $i^{th}$ row of $B$ represent an edge pointing towards a leaf in $G$.
Every entry of this row will be 0.
We write $B$ such that the first $k$ rows represent the $k$ edges pointing towards the $k$ leaves of the graphs.
We cofactor expansion choosing the first row of $\text{det}(\lambda I-B)$.
Then $\text{det}(\lambda I-B)=\lambda\text{det}(\lambda I-B_{\neq 1})$ where $B_{\neq 1}$ is $B$ without the $1^{st}$ row or column.
Continuing we get that $\text{det}(\lambda I-B)=\lambda^k\text{det}(\lambda I-B_{\neq\{1,...,k\}})$.

Let the $i^{th}$ row of $B_{\neq\{1,...,k\}}$ represent the edge pointing to the parent of a leaf vertex. 
Since all rows representing edges pointing to leaf vertexs have been removed, the $i^{th}$ row will only have one nonzero entry $\lambda$. Thus, $\text{det}(\lambda I-B)=\lambda^{k+1}\text{det}(\lambda I-B_{\neq\{1,...,k,i\}})$.
Continue this process for the parents of all leaf vertexs and then for parents of parents.
This continues until $B_{\neq\{1,...,k,i_1,i_2,...,i_j\}}$ represents an adjacency matrix of a directed tree of $n-1$ edges.
By Lemma \ref{thm:b-tree-lemma}, we get that $\text{det}(\lambda I-B)=\lambda^{2(n-1)}$.
\end{proof} 

\begin{corollary}[\cite{torres2020non}]
Let $G$ be a graph with $m$ edges and $T$ be a tree with $n$ vertices.
Let $B$ be the non-backtracking matrix of $G$.
Define $\hat{G}$ as the graph constructed by joining $G$ and $T$ on one vertex.
Define $\hat{B}$ be the non-backtracking matrix of $\hat{G}$.
Then $\sigma(\hat{B})$ is $\sigma(B)$ along with eigenvalue 0 with algebraic multiplicity $2(n-1)$.
\label{thm:adding-trees}
\end{corollary}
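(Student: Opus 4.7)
My plan is to adapt the cofactor expansion technique from the proof of Theorem \ref{thm:b-tree} and strip the tree $T$ off the matrix $\hat{B}$ one edge at a time, showing that
\[
\det(\lambda I - \hat{B}) = \lambda^{2(n-1)}\det(\lambda I - B),
\]
which is equivalent to the stated description of $\sigma(\hat{B})$. I root $T$ at the joining vertex $v$ and process its edges from the leaves inward: each leaf removal peels off the two directed edges of $\hat{G}$ associated to a single $T$-edge and should contribute a factor of $\lambda^2$ to the determinant.

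The key local observation is as follows. Let $\ell \neq v$ be a leaf of the current (reduced) subtree with parent $p'$. In the current reduced matrix the row indexed by the directed edge $(p',\ell)$ has off-diagonal entries only at columns $(\ell,x)$ for neighbors $x \neq p'$ of $\ell$ in $\hat{G}$. Because $\ell \neq v$, the neighbors of $\ell$ in $\hat{G}$ are exactly its neighbors in $T$, and at this stage of the process every former child of $\ell$ has already been stripped. Hence the row reduces to a single diagonal entry $\lambda$, and cofactor expansion yields a factor of $\lambda$ and deletes row/column $(p',\ell)$. A symmetric argument applied to column $(\ell,p')$ yields another factor of $\lambda$ and deletes row/column $(\ell,p')$. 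Iterating over any leaf ordering of $T$ (which is possible since a tree on more than one vertex has at least two leaves, so there is always a leaf distinct from $v$) strips all $2(n-1)$ directed $T$-edges and extracts a total factor of $\lambda^{2(n-1)}$.

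The one point that requires care — and is the main obstacle — is verifying that once the stripping is complete the surviving matrix is \emph{exactly} $\lambda I - B$, despite $v$ having more neighbors in $\hat{G}$ than in $G$. Consider a row indexed by a directed $G$-edge $(u,v)$ incident to the joining vertex: in $\hat{B}$ this row has non-zero entries at columns $(v,x)$ for all neighbors $x\neq u$ of $v$ in $\hat{G}$, including the spurious entries $(v,t)$ for $T$-neighbors $t$ of $v$. But each such column is indexed by a directed $T$-edge and has been stripped in the preceding steps, so the spurious entries never appear in the reduced matrix. A symmetric check handles columns indexed by $(v,u)$, and for $G$-edges not incident to $v$ the adjacencies in $\hat{G}$ agree with those in $G$ automatically. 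This confirms that the reduced matrix is $\lambda I - B$, yielding the characteristic polynomial identity above and hence the corollary.
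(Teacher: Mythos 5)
Your proposal is correct and follows essentially the same route as the paper, which likewise orders the edges so the $2(n-1)$ directed $T$-edges come last and peels them off by the cofactor-expansion method of Theorem \ref{thm:b-tree} to obtain $\det(\lambda I-\hat{B})=\lambda^{2(n-1)}\det(\lambda I-B)$. Your write-up is in fact more explicit than the paper's one-line argument, particularly in checking that the spurious entries in rows and columns indexed by $G$-edges incident to the joining vertex disappear once the $T$-edges are stripped, so the surviving block is exactly $\lambda I-B$.
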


\begin{proof}
Order the entries of $\hat{B}$ such that the last $2(n-1)$ entries represent edges in the tree $T$. Consider the matrix $\lambda I-\hat{B}$. We can use the same method in Theorem \ref{thm:b-tree} to find that $\text{det}(\lambda I-\hat{B})=\lambda^{2(n-1)}\text{det}(\lambda I-\hat{B}_{\neq \{2m+1,2m+2,...,2m+2(n-1)\}})=\lambda^{2(n-1)}\text{det}(\lambda I-B)$. The result follows.
\end{proof}

\subsection{Cycles}

In the case $\sigma(K)=\sigma(B)$, we have that $G$ is a cycle $C_n$.
The spectrum of the adjacency matrix of $C_n$ is known to be the $2\cos(2\pi j/n)$ for $j=\{0,...,n-1\}$ (see \cite{brouwer2011spectra}).
Similarly, the non-backtracking spectrum of $C_n$ can be explicitly calculated.
First, we note the following fact about the spectrum of directed cycles:

\begin{lemma}[\cite{brouwer2011spectra}]
Let $D_n$ be a directed $n$-cycle with adjacency matrix $A$.
Then $\sigma(A)$ consists of the $n^{th}$ complex roots of unity.
\label{directed-cycles}
\end{lemma}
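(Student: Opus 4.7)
The plan is to identify $A$ explicitly and then exhibit a full set of eigenvectors. After labeling the vertices $0,1,\dots,n-1$ so that the directed edges are $i\to i+1 \pmod n$, the adjacency matrix $A$ satisfies $A_{i,j}=1$ precisely when $j\equiv i+1\pmod n$ and is zero otherwise. In other words, $A$ is the cyclic shift permutation matrix, and in particular $A^n = I$, so the minimal polynomial of $A$ divides $\lambda^n - 1$.

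Next I would set $\omega = e^{2\pi i/n}$ and verify directly that, for each $j\in\{0,1,\dots,n-1\}$, the Fourier vector $v_j = (1,\omega^j,\omega^{2j},\dots,\omega^{(n-1)j})^T$ is an eigenvector of $A$ with eigenvalue $\omega^j$. The computation is a single line: $(Av_j)_k = (v_j)_{k+1\bmod n} = \omega^{(k+1)j} = \omega^j (v_j)_k$, where the wrap-around at $k=n-1$ uses $\omega^{nj} = 1$. Since the matrix whose columns are $v_0,\dots,v_{n-1}$ is Vandermonde in the distinct nodes $1,\omega,\omega^2,\dots,\omega^{n-1}$, these eigenvectors are linearly independent.

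This already finishes the proof by a dimension count: we have $n$ linearly independent eigenvectors of an $n\times n$ matrix, so every eigenvalue is of the form $\omega^j$ and each $\omega^j$ actually occurs. There is really no main obstacle here; the only care required is correctly handling the cyclic index when verifying $Av_j = \omega^j v_j$. An equally quick alternative would be to cofactor-expand $\det(\lambda I - A)$ along successive rows — each of which has exactly one off-diagonal $-1$ — to obtain $\lambda^n - 1$ directly, but exhibiting the eigenvectors is more informative and keeps the Fourier viewpoint available for subsequent use in analyzing non-backtracking spectra on cycles.
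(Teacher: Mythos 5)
Your proof is correct and is the standard argument: the adjacency matrix of the directed $n$-cycle is the cyclic shift permutation matrix, and the Fourier vectors $v_j$ diagonalize it with eigenvalues the $n^{th}$ roots of unity. The paper simply cites this fact from the literature without giving a proof, and your argument is exactly the one found in the cited reference, so there is nothing further to compare.
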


In Figure \ref{fig:cycle}, we see that changing each of the edges of $C_n$ to be two directed edges creates two directed cycles $D_n$.
Given the non-backtracking condition, these two directed cycles can be considered disjoint.
Using this fact, \cite{torres2020non} calculates the non-backtracking spectrum of $C_n$ explicity.

\begin{theorem}[\cite{torres2020non}]
Let $C_n$ be an undirected cycle with $n$ vertices. Let $B$ be the non-backtracking matrix of $C_n$. Then the eigenvalues of $B$ are $e^{2\pi j/n}$ for $j=0,...,n-1$ and each eigenvalue has multiplicity 2.
\label{thm:cycle-spectrum}
\end{theorem}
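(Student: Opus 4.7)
The plan is to exploit the observation already made in the paragraph preceding the theorem: once each undirected edge of $C_n$ is replaced by two oppositely oriented edges, the non-backtracking constraint forces any walk on $\hat{C_n}$ to continue in the same rotational direction forever. Thus the directed graph $\hat{C_n}$ underlying $B$ splits into two vertex-disjoint directed $n$-cycles (one "clockwise" and one "counterclockwise"), and $B$ itself is exactly the adjacency matrix of this disjoint union.

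Concretely, I would label the vertices of $C_n$ cyclically as $0,1,\ldots,n-1$ and let $e_j^+ = (j,j+1)$ and $e_j^- = (j+1,j)$ (indices mod $n$). A direct check from the definition in \eqref{eqn:nb-matrix-def} shows that the only edge $f$ with $B(e_j^+, f) = 1$ is $e_{j+1}^+$, and the only edge with $B(e_j^-, f) = 1$ is $e_{j-1}^-$: the other candidate in each case is excluded by the non-backtracking condition $u \neq y$. So after the permutation that lists the $e_j^+$ first and the $e_j^-$ afterward, $B$ becomes the block-diagonal matrix
\begin{equation*}
B \;\sim\; \begin{pmatrix} A_{D_n} & 0 \\ 0 & A_{D_n}^{\top} \end{pmatrix},
\end{equation*}
where $A_{D_n}$ is the adjacency matrix of a directed $n$-cycle.

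I would then invoke Lemma \ref{directed-cycles} to read off $\sigma(A_{D_n}) = \{e^{2\pi i j/n} : j=0,\ldots,n-1\}$, noting that $A_{D_n}^{\top}$ has the same spectrum (transpose, or equivalently the reverse-direction cycle). Since the spectrum of a block-diagonal matrix is the multiset union of the spectra of its blocks, each $n$-th root of unity appears in $\sigma(B)$ with multiplicity exactly $2$, which is the claim.

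The only real obstacle is verifying carefully that the reindexing really does produce two \emph{disjoint} directed cycles rather than some more tangled structure; this is purely a consequence of the fact that every vertex of $C_n$ has degree exactly $2$, so the non-backtracking rule leaves each outgoing edge with a unique legal successor. Once that combinatorial check is in hand, the theorem is immediate from Lemma \ref{directed-cycles}.
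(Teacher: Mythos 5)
Your proof is correct and takes essentially the same route the paper itself indicates: the paragraph preceding the theorem observes that the non-backtracking condition splits $\hat{C_n}$ into two disjoint directed $n$-cycles, so $B$ is permutation-similar to a block-diagonal matrix with two directed-cycle blocks, and Lemma \ref{directed-cycles} then gives each $n$-th root of unity with multiplicity two. The paper states the result without writing out a formal proof (citing Torres), so your explicit verification that each directed edge has a unique legal successor is, if anything, a more complete account of the same argument.
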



With an understanding of the non-backtracking spectrum of cycles, we can expand to graphs with "pendant" cycles.
By this we mean a graph $G$ made from connecting a graph $\hat{G}$ at one vertex with a cycle $C_n$, as in Figure \ref{fig:exterior-cycle}.
We can guarantee that the non-backtracking spectra of graphs with pendant cycles contain eigenvalues from the non-backtracking spectrum of $C_n$.


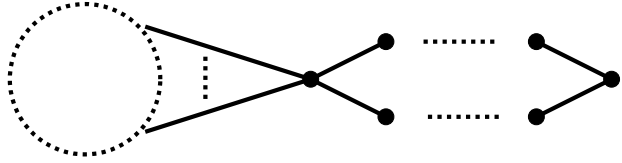
\begin{figure}
    \centering
    \begin{tikzpicture}
\draw[dotted,ultra thick] (0,0) circle (1);
\draw[ultra thick](.8,.7)--(3,0);
\draw [ultra thick] (.8,-.7)--(3,0);
\filldraw[fill=black] (3,0) circle (3pt);
\draw[dotted,ultra thick] (1.6,.3)--(1.6,-.3);
\draw[ultra thick] (3,0) -- (4,.5);
\draw[ultra thick] (3,0) -- (4,-.5);
\filldraw[fill=black] (4,.5) circle (3pt);
\filldraw[fill=black] (4,-.5) circle (3pt);
\draw[dotted,ultra thick] (4.5,.5) -- (5.5,.5);
\draw[ultra thick] (6,.5) -- (7,0);
\draw [ultra thick] (7,0) -- (6,-.5);
\draw [dotted,ultra thick] (5.5,-.5) -- (4.5,-.5);
\filldraw[fill=black] (6,.5) circle (3pt);
\filldraw[fill=black] (7,0) circle (3pt);
\filldraw[fill=black] (6,-.5) circle (3pt);
\end{tikzpicture}
    \caption{A graph with an pendant cycle takes any graph and attachs a cycle of length $n$ to one vertex of the graph.}
    \label{fig:exterior-cycle}
\end{figure}

\begin{theorem}
Let $\hat{G}$ be a graph with $2m$ edges and let $C_n$ be a cycle. Let $G$ be a graph created by joining $\hat{G}$ and $C_n$ at exactly one vertex. Let $B$ be the non-backtracking matrix of $G$.
Then $e^{2\pi j/n}\in\sigma(B)$ for $j=0,...,n-1$.
\end{theorem}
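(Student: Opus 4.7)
The plan is to exhibit, for each $j \in \{0,1,\dots,n-1\}$, a nonzero vector $x$ satisfying $Bx = \omega\, x$ with $\omega = e^{2\pi i j/n}$, which then gives $\omega \in \sigma(B)$. Label the cycle vertices $v_0, v_1, \dots, v_{n-1}$ (indices mod $n$) so that $v_0$ is the vertex shared with $\hat{G}$, and for $i = 1,\dots,n$ write $e_i^+ = (v_{i-1}, v_i)$ and $e_i^- = (v_i, v_{i-1})$ for the two oriented cycle edges. Each of $v_1,\dots,v_{n-1}$ has degree $2$ in $G$, while $v_0$ has the extra neighbors $N_{\hat{G}}(v_0)$.

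The natural first candidate is $x^{(+)} := \sum_{i=1}^n \omega^i \mathbf{1}_{e_i^+}$. At every interior cycle vertex the non-backtracking constraint forces a unique continuation, so $B\mathbf{1}_{e_i^+} = \mathbf{1}_{e_{i-1}^+}$ for $i = 2,\dots,n$. At $v_0$, however, the edge $e_1^+$ has extra pre-images and $B\mathbf{1}_{e_1^+} = \mathbf{1}_{e_n^+} + \sum_{w \in N_{\hat{G}}(v_0)} \mathbf{1}_{(w,v_0)}$. Summing and using $\omega^n = 1$, one obtains
\[
B x^{(+)} = \omega\, x^{(+)} + \omega \sum_{w \in N_{\hat{G}}(v_0)} \mathbf{1}_{(w, v_0)},
\]
so $x^{(+)}$ alone is not an eigenvector: there is a spillover onto the edges pointing into $\hat{G}$ at $v_0$.

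The key observation is that the backward analogue $x^{(-)} := \sum_{i=1}^n \omega^{-i} \mathbf{1}_{e_i^-}$ produces a spillover on exactly the same set of edges. The parallel calculation, using $B\mathbf{1}_{e_n^-} = \mathbf{1}_{e_1^-} + \sum_{w \in N_{\hat{G}}(v_0)} \mathbf{1}_{(w,v_0)}$ and $\omega^{-n} = 1$, gives
\[
B x^{(-)} = \omega\, x^{(-)} + \sum_{w \in N_{\hat{G}}(v_0)} \mathbf{1}_{(w, v_0)}.
\]
Hence the combination $x := x^{(+)} - \omega\, x^{(-)}$ obeys $Bx = \omega\, x^{(+)} - \omega^2 x^{(-)} = \omega\, x$, and $x$ is nonzero since its $e_1^+$-coordinate equals $\omega$. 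One then verifies directly that $Bx$ vanishes on every non-cycle edge; the only nontrivial case is at the edges $(w, v_0)$ with $w \in N_{\hat{G}}(v_0)$, where the two contributions $\omega$ (from $x^{(+)}_{e_1^+}$) and $-\omega$ (from $-\omega\, x^{(-)}_{e_n^-}$) cancel by design, while all edges inside $\hat{G}$ are unreached since $x$ is supported only on cycle edges.

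The main obstacle is identifying the right linear combination: the forward-only guess breaks down at the joining vertex $v_0$, and the fix requires noticing that both the forward and the backward constructions spill onto the same $\hat{G}$-bound edges $(w, v_0)$, and that these spillovers differ only by a factor of $\omega$, so a single scalar combination annihilates them simultaneously.
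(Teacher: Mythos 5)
Your proof is correct and follows essentially the same route as the paper: both arguments exhibit an explicit eigenvector supported only on the cycle edges, consisting of a directed-cycle eigenvector on each of the two orientations of $C_n$, scaled so that the contributions spilling onto the edges $(w,v_0)$ into the joining vertex cancel. The paper phrases this via a block decomposition $B=\begin{bmatrix}B_1&Q&Q\\ \ast&D_n&0\\ \ast&0&D_n\end{bmatrix}$ and the vector $\begin{bmatrix}0&\mathbf{y}&-\mathbf{y}\end{bmatrix}^T$, which is your $x^{(+)}-\omega\,x^{(-)}$ up to indexing and a global scalar.
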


\begin{proof}
We construct $B$ such that the first $2m$ rows and columns correspond to the edges in the graph $\hat{G}$ and the last $2n$ rows and columns correspond to the edges in $C_n$.
We know that the non-backtracking matrix of a cycle can be written as a block diagonal matrix. So
\[B=\begin{bmatrix}B_1&Q&Q\\\textbf{*}&D_n&0\\\textbf{*}&0&D_n\end{bmatrix}\]
where $B_1$ is the non-backtracking matrix of $\hat{G}$ and the bottom-right $2\times 2$ block matrix is the non-backtracking matrix of $C_n$.
Since $\hat{G}$ and $C_n$ are joined at exactly one vertex, there are $d_r$ rows in the first $2m$ rows of $B$ that have nonzero entries in $Q$, where $d_r$ is the degree of the vertex joining $\hat{G}$ and $C_n$.
In fact, we can define $Q$ as a block of zeros with one column containing non-zero entries.
This column will be identical in the block above each $D_n$ block, as each edge pointing towards the cycle points to each of the two directed cycles formed on the directed edges.
We define a vector \begin{align*}
\mathbf{x}&=\begin{bmatrix}0&\dotsb&0&\mathbf{y}&-\mathbf{y}\end{bmatrix}^T\end{align*} where the first $2m$ entries are 0, and $\mathbf{y}=\begin{bmatrix}1&e^{2\pi j i/n}&e^{2(2)\pi j i/n}&\dotsb&e^{2(n-1)\pi j i/n}\end{bmatrix}^T$.
We then we recall that $D_n\mathbf{y}=e^{2\pi i j/n}\mathbf{y}$.
Thus we see that
\begin{align*}
    B\mathbf{x}&=\begin{bmatrix}B_1&Q&Q\\\textbf{*}&D_n&0\\\textbf{*}&0&D_n\end{bmatrix}\begin{bmatrix}0\\\mathbf{y}\\-\mathbf{y}\end{bmatrix}
    =\begin{bmatrix}Q\mathbf{y}-Q\mathbf{y}\\D_n\mathbf{y}\\-D_n\mathbf{y}\end{bmatrix}
    =\begin{bmatrix}0\\e^{2\pi i j/n}\mathbf{y}\\-e^{2\pi i j/n}\mathbf{y}\end{bmatrix}
    =e^{2\pi i j/n}\begin{bmatrix}0\\\mathbf{y}\\-\mathbf{y}\end{bmatrix}
    =e^{2\pi i j/n}\mathbf{x}.
\end{align*}
So $e^{2\pi i j/n}\in\sigma(B)$.
\end{proof}

We define a specific subset of these graphs with pendant cycles where both $\hat{G}$ and $C_n$ are cycles.
We will call these graphs pinwheel graphs.
An example of a pinwheel graph can be found in Figure \ref{fig:pinwheel}.

\begin{figure}
    \centering
    \begin{tikzpicture}
\draw[ultra thick](-1,-.5)--(-1,.5);
\draw [ultra thick] (1,.5)--(1,-.5);
\draw [ultra thick](1,.5)--(0,0);
\draw [ultra thick] (1,-.5) -- (0,0);
\draw [ultra thick] (-1,-.5)--(0,0);
\draw [ultra thick] (-1,.5)--(0,0);
\filldraw[fill=black] (0,0) circle (3pt);
\filldraw[fill=black] (-1,-.5) circle (3pt);
\filldraw[fill=black](-1,.5) circle (3pt);
\filldraw[fill=black](1,-.5) circle (3pt);
\filldraw[fill=black](1,.5) circle (3pt);
\end{tikzpicture}
    \caption{An example of a pinwheel graph with two 3-cycles connected at one vertex.}
    \label{fig:pinwheel}
\end{figure}
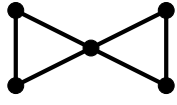

\begin{corollary}
Let $G$ be a pinwheel graph made by connecting $p$ cycles of length $k$ at one vertex. Let $B$ be its non-backtracking matrix.
Then $e^{2\pi i j/k}\in\sigma(B)$ with multiplicity $p$ for $j\in\{0,...,k-1\}$, $e^{2\pi i j/(2k)}$ with multiplicity $p-1$ for all $j\in\{1,3,5,...,2k-1\}$, and the $k^{th}$ complex roots of $2p-1$ are in $\sigma(B)$ with multiplicity 1. These capture the entirety of $\sigma(B)$ and the spectral radius of $B$ is $|(2p-1)^{1/k}|$.
\end{corollary}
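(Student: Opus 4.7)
The plan is to construct explicit eigenvectors of $B$ using a separation-of-variables ansatz that respects the permutation symmetry among the $p$ cycles. Label the central vertex $v_0$, and for each $c\in\{1,\ldots,p\}$ label the directed edges of cycle $c$ in one orientation $e_1^{c,+},\ldots,e_k^{c,+}$ so that $e_k^{c,+}$ points into $v_0$; let $e_1^{c,-},\ldots,e_k^{c,-}$ be the reverse orientation, so that $e_1^{c,-}$ also points into $v_0$. For a candidate eigenvalue $\mu\neq 0$, set
\[
    x_{e_j^{c,+}} \;=\; \alpha_c\,\mu^{j-1}, \qquad x_{e_j^{c,-}} \;=\; \beta_c\,\mu^{-(j-1)},
\]
with unknown scalars $\alpha_c,\beta_c$. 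Because every non-central vertex has degree $2$, the non-backtracking continuation there is forced and the equation $B\mathbf{x}=\mu\mathbf{x}$ holds automatically at all non-central coordinates for any $\mu$. All content of the eigenvalue equation is concentrated at the $2p$ coordinates corresponding to the directed edges pointing into $v_0$.

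Writing those $2p$ equations in terms of $S_\alpha:=\sum_c\alpha_c$ and $S_\beta:=\sum_c\beta_c$ yields, for every $c$,
\[
    \mu^k \alpha_c + \mu^{-(k-1)}\beta_c \;=\; S_\alpha + \mu^{-(k-1)}S_\beta, \qquad \alpha_c + \mu \beta_c \;=\; S_\alpha + \mu^{-(k-1)}S_\beta.
\]
Subtracting gives $(\mu^k-1)\bigl(\alpha_c-\mu^{-(k-1)}\beta_c\bigr)=0$, so either $\mu^k=1$ or $\alpha_c=\mu^{-(k-1)}\beta_c$ for every $c$. The proof then splits into these two branches, each producing one or more of the claimed families of eigenvectors.

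In the branch $\mu^k=1$, the two displayed equations collapse into the single condition $\alpha_c+\mu\beta_c=S_\alpha+\mu S_\beta$ for every $c$. Summing over $c$ forces $(p-1)(S_\alpha+\mu S_\beta)=0$, so for $p\ge 2$ each side vanishes and $\alpha_c=-\mu\beta_c$. The free parameters $(\alpha_1,\ldots,\alpha_p)$ then yield a $p$-dimensional eigenspace for each of the $k$ values $\mu=e^{2\pi i j/k}$. In the branch $\alpha_c=\mu^{-(k-1)}\beta_c$ with $\mu^k\neq 1$, substitution reduces the second equation to $(1+\mu^k)\beta_c=2S_\beta$. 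When $\mu^k=-1$ this collapses to $S_\beta=0$, leaving a $(p-1)$-dimensional family of solutions $\beta_c$ for each of the $k$ values $\mu=e^{2\pi i j/(2k)}$ with $j$ odd. When $\mu^k\notin\{1,-1\}$, $\beta_c$ must be independent of $c$, and the scalar equation becomes $\mu^k=2p-1$, giving a one-dimensional eigenspace for each of the $k$ values $\mu$ with $\mu^k=2p-1$.

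Adding dimensions gives $pk+(p-1)k+k=2pk=2m$, which equals the size of $B$, so the three families exhaust $\sigma(B)$ with the stated multiplicities. Since the first two families have modulus one while the third has modulus $(2p-1)^{1/k}\ge 1$, the spectral radius is $(2p-1)^{1/k}$. The main bookkeeping issue will be confirming that the three families are pairwise disjoint: this holds whenever $p\ge 2$, because then $2p-1\ge 3$ is neither a $k$-th root of $1$ nor of $-1$. The degenerate case $p=1$, where the pinwheel reduces to a single cycle, is already handled by Theorem~\ref{thm:cycle-spectrum}.
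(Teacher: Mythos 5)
Your proof is correct, and it reaches the result by a genuinely different route than the paper. The paper writes $B$ as an explicit $2p\times 2p$ block matrix of circulant blocks $C_k$ and rank-one coupling blocks $R$, \emph{guesses} the three families of eigenvectors (built from eigenvectors of $C_k$ and $C_{2k}$ and from a positive geometric vector), verifies each by direct block multiplication, and then needs a separate, somewhat ad hoc argument to show the $p-1$ vectors $\mathbf{v}_1,\dots,\mathbf{v}_{p-1}$ are linearly independent, finishing with the same dimension count $pk+(p-1)k+k=2pk$ and an appeal to Perron--Frobenius for the spectral radius. Your transfer-matrix ansatz instead exploits that every non-hub vertex has degree $2$, so the eigenvector is forced to propagate geometrically along each directed cycle and the entire eigenproblem collapses to a $2p\times 2p$ system at the hub; solving that system \emph{derives} the eigenvalue conditions $\mu^k\in\{1,-1,2p-1\}$ rather than verifying guessed candidates, and the multiplicities appear automatically as dimensions of the solution spaces (so no separate linear-independence check is needed). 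You also get the spectral radius by simply comparing moduli of the now-exhaustive list, avoiding irreducibility and Perron--Frobenius entirely. The two proofs share only the final counting step; yours is tighter on the multiplicity bookkeeping, while the paper's has the virtue of displaying the eigenvectors of $B$ in closed form in the original edge coordinates. Your handling of the degenerate case $p=1$ and of the pairwise disjointness of the three eigenvalue families (using $2p-1\geq 3$) closes the only loopholes in the counting argument.
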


\begin{proof}
We can write $B$ such that
\begin{align*}
    B=&\begin{bmatrix}C_k&0&R&R&\dotsb&R&R\\
    0&C_k&R&R&\dotsb&R&R\\
    R&R&C_k&0&\dotsb&R&R\\
    R&R&0&C_k&\dotsb&R&R\\
    \vdots&\vdots&\vdots&\vdots&\ddots&\vdots&\vdots\\
    R&R&R&R&\dotsb&C_k&0\\
    R&R&R&R&\dotsb&0&C_k\end{bmatrix}
\end{align*}
where $R$ is a block of all zeros, with the exception of the last row, which has a 1 in the first entry of the row, and zeroes everywhere else. By straightforward computation, we can see that 
    \[\begin{bmatrix}\mathbf{y}&-\mathbf{y}&0&0&0&0&\dotsb&0&0\end{bmatrix}^T\]
    \[\begin{bmatrix}0&0&\mathbf{y}&-\mathbf{y}&0&0&\dotsb&0&0\end{bmatrix}^T\]
    \[\vdots\]
    \[\begin{bmatrix}0&0&0&0&0&0&\dotsb&\mathbf{y}&-\mathbf{y}\end{bmatrix}^T\]
are all eigenvectors with eigenvalue $e^{2\pi ij/k}$ where $C_k\mathbf{y}=e^{2\pi ij/k}\mathbf{y}$. Thus, each root of unity $e^{2\pi ij/k}$ has at least algebraic multiplicity $p$.

Let $C_{2k}\mathbf{z}=e^{2\pi ij/2k}\mathbf{z}$ for $j\in\{1,3,5,...,2k-1\}$ and $\mathbf{z}=\begin{bmatrix}z_1&z_2\end{bmatrix}^T$ where $z_i$ is the $i^{th}$ half of the eigenvector $\mathbf{z}$. Then $z_1=\begin{bmatrix}\alpha_1&\dotsb&\alpha_k\end{bmatrix}^T$ and $z_2=\begin{bmatrix}\beta_1&\dotsb&\beta_k\end{bmatrix}^T$.
Since $C_{2k}$ is a circulant matrix, then $\alpha_l=e^{2\pi ij/2k}\alpha_{l+1}$ for all $l\in\{1,...,k-1\}$, $\beta_l=e^{2\pi ij/2k}\beta_{l+1}$ for all $l\in\{1,...,k-1\}$, $\alpha_k=e^{2\pi ij/2k}\beta_1$, and $\beta_k=e^{2\pi ij/2k}\alpha_1$. 
Lastly define the vector
\[\mathbf{v}_1=\begin{bmatrix}(p-1)z_1&-(p-1)z_2&-z_1&z_2&-z_1&z_2&\dotsb&-z_1&z_2\end{bmatrix}^T.\]
Then we see through direct calculation that
\begin{align*}
    B\mathbf{v}_1
    =\begin{bmatrix}-(p-1)\mathbf{r}&
    \mathbf{r}&\dotsb&\mathbf{r}\end{bmatrix}
    =e^{2\pi ij/2k}\mathbf{v}_1
\end{align*}
where $\mathbf{r}=\begin{bmatrix}\mathbf{r}_1&\mathbf{r}_2\end{bmatrix}^T$, $\mathbf{r}_1=\begin{bmatrix}-\alpha_2&\dotsb&-\alpha_k&-\beta_1\end{bmatrix}^T$ and $\mathbf{r}_2=\begin{bmatrix}\beta_2&\dotsb&\beta_k&\alpha_1\end{bmatrix}^T$.
So $\mathbf{v}_1$ is an eigenvector of $B$ with associated eigenvalue $e^{2\pi ij/2k}$. Similarly, we define \[\mathbf{v}_2=\begin{bmatrix}-z_1&z_2&(p-1)z_1&-(p-1)z_2&-z_1&z_2&\dotsb&-z_1&z_2\end{bmatrix}\] and so on for $\mathbf{v}_i$ where $i\in\{1,...,p\}$.
By a similar calculation, we see that $B\mathbf{v}_i=e^{2\pi ij/2k}\mathbf{v}_i$ for all $i$.
We now show the set $\{\mathbf{v}_1,...,\mathbf{v}_{p-1}\}$ is linearly independent.
Let $0=\sum_{i=1}^{n-1}\gamma_i\mathbf{v}_i$. Note that last two entries of this summation give that $\sum_{i=1}^{p-1}\gamma_iz_1=0$ and $\sum_{i=1}^{p-1}\gamma_iz_2=0$. Then the first entry gives that $\gamma_1(p-1)z_1=\sum_{i=2}^{p-1}\gamma_iz_1$. From the second to last entry, this must mean that $\gamma_1(p-1)z_1=-\gamma_1z_1$. Since $z_1\neq 0$ and $p-1$ is fixed, then $\gamma_1=0$. The third entry then gives that $\gamma_2(p-1)z_1=\sum_{i=3}^{p-1}\gamma_iz_1$. Again, by the second to last entry, we get that $\gamma_2(p-1)z_1=-\gamma_2z_1$. So $\gamma_2=0$. Continuing through every odd entry in order of $\sum_{i=1}^{p-1}\gamma_i\mathbf{v}_i$, we get that $\gamma_i=0$ for all $i=\{1,...,p-1\}$. Thus, the set $\{\mathbf{v}_1,\dotsb,\mathbf{v}_{p-1}\}$ must be linearly independent. Thus, $e^{2\pi ij/2k}$ must have algebraic multiplicity at least $p-1$.

We now let $\lambda=(2p-1)^{1/k}$ be one of the $k^{th}$ complex roots of $2p-1$. Then we define $\mathbf{w}=\begin{bmatrix}\lambda&\lambda^2&\dotsb \lambda^k&\lambda&\dotsb&\lambda^k&\dotsb&\lambda&\dotsb&\lambda^k\end{bmatrix}$ where $\mathbf{w}\in\mathbb{C}^{2pk}$. Then we see that
\begin{align*}
    B\mathbf{w}&=\begin{bmatrix}\lambda^2&\dotsb&\lambda^{k+1}&\lambda^2&\dotsb&\lambda^{k+1}&\dotsb&\lambda^2&\dotsb&\lambda^{k+1}\end{bmatrix}
    =\lambda\mathbf{w}.
\end{align*}
Since there are $k$ complex roots of $2p-1$, there are $k$ unique eigenvalues with at least algebraic multiplicity 1.
Further since $\mathbf{w}$ is positive and $B$ is nonnegative and irreducible by Propostion \ref{thm:irreducible}, then $|(2p-1)^{1/k}|$ must be the spectral radius of $B$.

To summarize, we have the following eigenvalues: $e^{2\pi ij/k}$ for $j\in\{0,...,k-1\}$ with multiplicity at least $p$, $e^{2\pi il/2k}$ for $l\in\{1,3,...,2k-1\}$ with multiplicity at least $p-1$, and the $k$ roots $(2p-1)^{1/k}$ with multiplicity at least $1$. We then see that $pk+(p-1)k+k=2pk$. Since $B\in M_{2pk}$, then all the multiplicities mentioned must be exact.
\end{proof}


\section{The $K$ Matrix}

The eigenvalues of $B$ coming from the equation \ref{eqn:char-B} in Ihara's Theorem can be found independent of $B$.
We define
\[K=\begin{bmatrix}A&D-I\\-I&0\end{bmatrix}\]
which has characteristic polynomial $\text{det}(\mu^2I-\mu A+(D-I))$.
Using $K$, we can create a decomposition of $B$ that clearly organizes the spectrum of $B$ and gives an alternate proof of Ihara's Theorem.

\begin{remark}
Lubetzky and Peres showed the matrix $B$ is unitarily similar to a block diagonal matrix showing which clearly shows the spectrum of $B$ as well as the eigenvectors.
\begin{theorem}[\cite{lubetzky2016cutoff}]
Let $G$ be a connected $d$-regular graph ($d\geq 3)$ on $n$ vertices.
Let $N=dn$ and $\lambda_i\in\sigma(A)$, with $\lambda_1=d$.
Then the operator $B$ is unitarily similar to 
\[\Lambda=\text{diag}\Biggl(d-1,\begin{bmatrix}\theta_2&\alpha_2\\0&\theta_2'\end{bmatrix},...,\begin{bmatrix}\theta_n&\alpha_n\\0&\theta_n'\end{bmatrix},-1,...,-1,1,...,1\Biggr)\]
where $|\alpha_i|<2(d-1)$ for $i$, $\theta_i$ and $\theta_i'$ are defined as the solutions of
\[\theta^2-\lambda_i\theta+d-1=0\]
and $-1$ has multiplicity $N/2-n$ and $1$ has multiplicity $N/2-n+1$.
\end{theorem}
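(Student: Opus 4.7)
The plan is to build an orthogonal decomposition of the edge space $\mathbb{C}^{\vec E}$ into $B$-invariant pieces on which $B$ acts in a transparent way, and then unitarily triangularize each piece.

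The key device is the pair of vertex-to-edge maps $S, T : \mathbb{C}^V \to \mathbb{C}^{\vec E}$ defined by $(Sf)(u,v) = f(v)$ and $(Tf)(u,v) = f(u)$. A direct check from (\ref{eqn:nb-matrix-def}) using $d$-regularity gives the identities
\[BS = SA - T, \qquad BT = (d-1)S.\]
Fix an orthonormal eigenbasis $f_1, \ldots, f_n$ of $A$ with $f_1 = \mathbf{1}/\sqrt n$, so $\lambda_1 = d$. These identities show that each $V_i := \mathrm{span}(Sf_i, Tf_i)$ is $B$-invariant, and in the basis $(Sf_i, Tf_i)$ the operator $B|_{V_i}$ has matrix $\begin{pmatrix}\lambda_i & d-1\\ -1 & 0\end{pmatrix}$, whose eigenvalues are the roots $\theta_i, \theta_i'$ of $\theta^2 - \lambda_i\theta + (d-1) = 0$. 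For $i = 1$ we have $Sf_1 = Tf_1 = \mathbf{1}/\sqrt n$, so $V_1$ is one dimensional and $B\mathbf{1} = (d-1)\mathbf{1}$, accounting for the leading singleton $d-1$ in $\Lambda$. The $V_i$'s are pairwise orthogonal, since $\langle Sf_i, Sf_j\rangle = \langle Tf_i, Tf_j\rangle = d\langle f_i,f_j\rangle$ and $\langle Sf_i, Tf_j\rangle = \lambda_j\langle f_i,f_j\rangle$ all vanish when $i\neq j$. Inside each $V_i$ with $i \geq 2$, a Gram-Schmidt step starting from a unit $\theta_i$-eigenvector produces an orthonormal basis in which $B|_{V_i}$ is upper triangular $\begin{pmatrix}\theta_i & \alpha_i\\ 0 & \theta_i'\end{pmatrix}$; the bound $|\alpha_i|<2(d-1)$ falls out of the Hilbert-Schmidt identity $|\theta_i|^2 + |\theta_i'|^2 + |\alpha_i|^2 = \|B|_{V_i}\|_{\mathrm{HS}}^2$ after computing the right hand side in an orthonormal basis using the $BS$ and $BT$ relations, together with $\theta_i\theta_i' = d-1$, $\theta_i + \theta_i' = \lambda_i$, and $|\lambda_i| \leq d$.

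It remains to analyze $B$ on the orthogonal complement $W^\perp$ of $W := V_1 \oplus \cdots \oplus V_n$. This complement has dimension $N - (2n-1) = N - 2n + 1$ and equals $\ker S^* \cap \ker T^*$, the edge functions with zero head-sum and zero tail-sum at every vertex. For $g \in W^\perp$ the non-backtracking sum telescopes:
\[(Bg)(u,v) = \sum_{x\sim v,\, x\neq u} g(v,x) = \Bigl(\sum_{x\sim v} g(v,x)\Bigr) - g(v,u) = -g(v,u),\]
so $B|_{W^\perp} = -J$, where $J$ is the self-adjoint unitary involution $(Jg)(u,v) = g(v,u)$. Since $W^\perp$ is $J$-invariant, it splits orthogonally into the $\pm 1$-eigenspaces of $J$, which are the $\mp 1$-eigenspaces of $B$. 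The antisymmetric divergence-free edge functions are precisely the cycle space of $G$, of dimension $m - n + 1$, yielding $+1$ for $B$ with multiplicity $N/2 - n + 1$; the complementary symmetric piece has dimension $m - n$, yielding $-1$ with multiplicity $N/2 - n$. Picking orthonormal bases on each piece and concatenating with the bases from the $V_i$'s assembles the unitary matrix conjugating $B$ to $\Lambda$.

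The main obstacle I anticipate is the Frobenius estimate $|\alpha_i| < 2(d-1)$, which requires a careful simplification of $\|B|_{V_i}\|_{\mathrm{HS}}^2$; the rest of the argument is a clean dimension count driven entirely by the two identities for $BS$ and $BT$ and the fact that $B|_{W^\perp} = -J$.
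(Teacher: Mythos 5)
The paper does not actually prove this theorem---it is quoted inside a remark and attributed to \cite{lubetzky2016cutoff}---so there is no in-paper argument to compare against line by line. Your proof is, in substance, the Lubetzky--Peres argument, and it is also exactly the unitary sharpening of the paper's Theorem \ref{thm:almost-similarity}: your identities $BS=SA-T$ and $BT=(d-1)S$ are the regular-graph specialization of equation \eqref{eqn:b-relates-k}, and your subspaces $V_i=\mathrm{span}(Sf_i,Tf_i)$ are the restriction of the column space of $\begin{bmatrix}S&T^T\end{bmatrix}$ to the $A$-eigenbasis, which is what lets the $2n\times 2n$ block $K$ split into $2\times 2$ pieces. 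The identification $B|_{W^\perp}=-J$ on $\ker S^*\cap\ker T^*$ and the split of $W^\perp$ into the cycle space (dimension $m-n+1$, giving $+1$) and the symmetric divergence-free part (dimension $m-n$, giving $-1$) matches the paper's description of $\mathscr{E}_{\pm1}\cap\mathrm{Null}(ST)$. The Frobenius step you flag as the main obstacle does go through: computing $\mathrm{tr}\bigl((B|_{V_i})^{\dagger}B|_{V_i}\bigr)$ from the Gram matrix $\bigl(\begin{smallmatrix}d&\lambda_i\\\lambda_i&d\end{smallmatrix}\bigr)$ gives $\|B|_{V_i}\|_{\mathrm{HS}}^2=1+(d-1)^2$ independently of $\lambda_i$, whence $|\alpha_i|^2=(d-2)^2$ when $\lambda_i^2<4(d-1)$ and $|\alpha_i|^2=d^2-\lambda_i^2\le(d-2)^2$ otherwise, so in fact $|\alpha_i|\le d-2<2(d-1)$.

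The one genuine loose end is the bipartite case, which the theorem as stated does not exclude. If $\lambda_n=-d$ then $Sf_n=-Tf_n$, so $V_n$ collapses to a line carrying the single eigenvalue $-(d-1)$; simultaneously the unsigned incidence map fails to be surjective, so the symmetric part of $W^\perp$ has dimension $m-n+1$ rather than $m-n$. Your dimension count $\dim W^\perp=N-2n+1$ and your multiplicity $N/2-n$ for $-1$ are therefore both off by one in that case (the totals reconcile, since the ``missing'' root $\theta=-1$ of $\theta^2+d\theta+(d-1)$ migrates into the symmetric block), but as written your argument silently assumes $Sf_i$ and $Tf_i$ are independent for all $i\ge2$, which is equivalent to $G$ being non-bipartite. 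You should either add that hypothesis or treat the degenerate block explicitly.
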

While we are unable to make such a similarity for a general graph $G$, we will decompose $B$ in a similar manner into a block diagonal matrix to see the eigenvalues more clearly.
\end{remark}

\begin{remark}
Torres succeeded in diagonalizing $B$ if the eigenvalues $\mu\in\sigma(B)$ such that $|\mu|>1$ are simple.
We will not use this requirement in our proof.
\end{remark}

To create this decomposition, we need to relate the matrices $B$ and $K$ outside of just their spectrum.  
Define $S\in M_{2m\times n}$ and $T\in M_{n\times 2m}$ where
\begin{align}
    S((u,v),x)=\begin{cases}1&v=x\\0&\text{otherwise}\end{cases}&&T(x,(u,v))=\begin{cases}1&x=u\\0&\text{otherwise}\end{cases}.
    \label{eqn:s-t}
\end{align}
Define $\tau\in M_{2m\times 2m}$ to be the non-backtracking operator
\begin{align}
    \tau((u,v),(x,y))&=\begin{cases}1&v=x\text{ and }u=y\\0&\text{otherwise}\end{cases}.
    \label{eqn:tau}
\end{align}
Matrix multiplication then gives the following identities: 
\begin{align}
    C&=ST,&
    B&=ST-\tau,&
    D&=T\tau S,&
    A&=TS.
    \label{eqn:s-t-identities}
\end{align}
Using these identities it is clear that
\begin{equation}
    B\begin{bmatrix}S&T^T\end{bmatrix}=\begin{bmatrix}S&T^T\end{bmatrix}K.
    \label{eqn:b-relates-k}
\end{equation}

Before creating our decomposition, we also need to understand the eigenvectors of $B$ for eigenvalues $\pm 1$.
Lubetzky and Peres \cite{lubetzky2016cutoff} show that these eigenvectors come from $\mathscr{E}_{-1}\cap\text{Null(ST)}$ and $\mathscr{E}_1\cap \text{Null}(ST)$ respectively, where $\mathscr{E}_i$ is the eigenspace of $\tau$ corresponding to eigenvalue $i$.
They further show that $\text{dim}(\mathscr{E}_{-1}\cap\text{Null(ST)})=m-n+1$ and $\text{dim}(\mathscr{E}_{1}\cap\text{Null(ST)})=m-n$ or $m-n+1$ if $G$ is bipartite.
With this we create our decomposition.

\begin{theorem}
Let $G$ be a connected graph and $B$ its non-backtracking matrix.
Let $R\in M_{2m\times 2(m-n)}$ where the columns of $R$ are linearly independent and the first $m-n$ columns of $R$ are taken from $\mathscr{E}_{-1}\cap\text{Null}(ST)$ and the rest are taken from $\mathscr{E}_1\cap\text{Null}(ST)$.
Then
\[BX=X\left[\begin{array}{ccc}
K&0&0\\
0&I_{m-n}&0\\
0&0&-I_{m-n}
\end{array}\right]\]
and $X=\begin{bmatrix}S&T^T&R\end{bmatrix}$.
\label{thm:almost-similarity}
\end{theorem}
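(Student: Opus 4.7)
The plan is to verify the asserted identity $BX = X \cdot \mathrm{diag}(K, I_{m-n}, -I_{m-n})$ column-block by column-block, leveraging the factorization $B = ST - \tau$ from (\ref{eqn:s-t-identities}) and the already-established identity (\ref{eqn:b-relates-k}). Partition $X = \begin{bmatrix} S & T^T & R \end{bmatrix}$ and compare $BX = \begin{bmatrix} BS & BT^T & BR \end{bmatrix}$ with the right-hand side, which (since the target middle matrix is block diagonal) equals $\begin{bmatrix} [S\;T^T]\,K & R_{-} & -R_{+} \end{bmatrix}$, where $R_{-}$ and $R_{+}$ denote the first and last $m-n$ columns of $R$ respectively.

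For the first two column-blocks I would simply quote (\ref{eqn:b-relates-k}): this immediately gives $B[S\;T^T] = [S\;T^T]K$, which is exactly what the top-left $K$ block of the target produces. No further calculation is required here, because all other blocks of $\mathrm{diag}(K, I, -I)$ act as zero on the $S$ and $T^T$ columns of $X$.

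For the $R$ block, write $BR = (ST - \tau)R = (ST)R - \tau R$. By construction every column of $R$ lies in $\mathrm{Null}(ST)$, so $(ST)R = 0$ and thus $BR = -\tau R$. Now use the eigenspace structure of $\tau$: the first $m-n$ columns of $R$ belong to $\mathscr{E}_{-1}$, so $\tau$ acts on them as $-I_{m-n}$ and hence $B$ acts as $+I_{m-n}$; the last $m-n$ columns belong to $\mathscr{E}_{1}$, on which $\tau$ acts as $I_{m-n}$ and so $B$ acts as $-I_{m-n}$. Combining, $BR = R \cdot \mathrm{diag}(I_{m-n}, -I_{m-n})$, which matches the bottom-right two blocks of the target.

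There is no real obstacle beyond the sign bookkeeping: the minus sign in $B = ST - \tau$ is what exchanges the roles of $\mathscr{E}_{-1}$ and $\mathscr{E}_1$ between $\tau$ and $B$, and one must take care to match the convention in the statement (columns from $\mathscr{E}_{-1}$ sit above the $+I_{m-n}$ block, columns from $\mathscr{E}_1$ above the $-I_{m-n}$ block). The fact that the columns of $X$ are linearly independent — not needed for the identity itself but implicit in calling this a decomposition — follows because $S,T^T$ have column spans determined by the head/tail maps on edges and the $R$ columns lie in $\mathrm{Null}(ST)$, which together with the known dimension counts from Lubetzky–Peres forces independence.
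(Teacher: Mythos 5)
Your proposal is correct and follows exactly the route the paper intends: the paper's proof is the one-line assertion that the identity ``follows directly from matrix multiplication and the properties of the columns of $R$,'' and you have simply carried out that multiplication, invoking the displayed identity $B\begin{bmatrix}S&T^T\end{bmatrix}=\begin{bmatrix}S&T^T\end{bmatrix}K$ for the first two column blocks and $B=ST-\tau$ together with $R\subseteq\text{Null}(ST)$ and the $\tau$-eigenspace conditions for the last two. Your sign bookkeeping (columns from $\mathscr{E}_{-1}$ pairing with $+I_{m-n}$ and columns from $\mathscr{E}_{1}$ with $-I_{m-n}$) matches the statement, so nothing is missing.
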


\begin{proof}
This follows directly from matrix multiplication and the properties of the columns of $R$.
\end{proof}

\begin{remark}
It clearly follows from the previous theorem that $\text{det}(\mu I-B)$ is just the characteristic polynomial of this block diagonal decomposition.
Hence, Ihara's Theorem is an immediate corollary of the above theorem.
Even more, Theorem \ref{thm:almost-similarity} does not just give the eigenvalues of $B$ but also the eigenvectors.
Let $\mathbf{x}$ be an eigenvector of $K$.
Then
\begin{align*}
    BX\begin{bmatrix}\mathbf{x}\\0\\0\end{bmatrix}&=X\begin{bmatrix}K&0&0\\0&I_{m-n}&0\\0&0&-I_{m-n}\end{bmatrix}\begin{bmatrix}x\\0\\0\end{bmatrix}=X\begin{bmatrix}K\mathbf{x}\\0\\0\end{bmatrix}=\mu X\begin{bmatrix}\mathbf{x}\\0\\0\end{bmatrix}.
\end{align*}
Hence we know all the eigenvectors of $B$ associated with eigenvalues of $K$ are of the form $X\begin{bmatrix}\mathbf{x}&0&0\end{bmatrix}^T$.
Additionally, from the construction of $R$, we see that the eigenvectors for $\pm 1$ are $\mathbf{y}\in\mathscr{E}_{-1}\cap\text{Null}(ST)$ and $\mathbf{z}\in\mathscr{E}_{1}\cap\text{Null}(ST)$ respectively.
\end{remark}

\section{Properties of $\sigma(B)$ using $\sigma(K)$}

With a decomposition of $B$ in terms of $K$, we now want to better understand the eigenvalues of $K$.
Our goal in this section is to use properties of $K$ in order to place bounds on the eigenvalues of $B$.
Using the Gershgorin Theorem we immediately have a bound on the spectral radius of $B$ in terms of the degree of $G$ \cite{horn2012matrix}.
\begin{proposition}
Let $G$ be a connected graph with $B$ the non-backtracking matrix and $d_{\max}$ the maximum of degree of $G$.
Then $\rho(B)\leq d_{\max}-1$ with equality if and only if $G$ is regular.
\end{proposition}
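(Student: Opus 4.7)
My plan is to handle the two assertions separately. For the upper bound I would invoke the Gershgorin-style fact that $\rho(M)\leq \max_i r_i(M)$ for any nonnegative matrix $M$. Reading off the definition in (\ref{eqn:nb-matrix-def}), the row of $B$ indexed by the directed edge $(u,v)$ contains exactly one $1$ for each neighbor $y$ of $v$ with $y\neq u$, so the row sum equals $\deg(v)-1 \leq d_{\max}-1$. This gives $\rho(B)\leq d_{\max}-1$.

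The ``if'' direction of the equality statement is immediate from the row-sum observation: when $G$ is $d$-regular, every row of $B$ sums to $d-1$, so $B\mathbf{1} = (d-1)\mathbf{1}$ and the upper bound forces $\rho(B) = d-1 = d_{\max}-1$.

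The ``only if'' direction has to be split by cases, because $B$ is irreducible only in the generic situation described by Proposition \ref{thm:irreducible}. In the main case ($G$ connected, not a cycle, $d_{\min}\geq 2$), irreducibility of $B$ together with the sharper form of Perron--Frobenius says that $\rho(B)=\max_i r_i$ exactly when all row sums coincide. Since $r_i = \deg(v)-1$ depends only on the head $v$ of the edge, equality forces all vertex degrees to be equal, so $G$ is regular. If $G$ is a cycle then it is already $2$-regular, and if $G$ is a tree then Theorem \ref{thm:b-tree} gives $\rho(B)=0$, so equality forces $d_{\max}=1$ and $G$ is a single edge.

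The one remaining situation---connected $G$ containing a cycle and a leaf---is where I expect the main obstacle, and I would handle it by iterated application of Corollary \ref{thm:adding-trees}: stripping pendant trees one at a time reduces $G$ to its $2$-core $G^*$ (the maximal subgraph with minimum degree at least $2$) while only contributing additional zero eigenvalues, so $\rho(B_G) = \rho(B_{G^*})$. Applying the cases already settled to $G^*$, the equality $\rho(B_G) = d_{\max}(G)-1$ together with $d_{\max}(G^*)\leq d_{\max}(G)$ forces $G^*$ to be regular of degree exactly $d_{\max}(G)$. But any nontrivial tree attached at a vertex $v\in G^*$ strictly increases $\deg(v)$, so $d_{\max}(G) > d_{\max}(G^*)$, a contradiction unless $G=G^*$; hence $G$ is regular.
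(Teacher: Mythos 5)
Your proof is correct. For the upper bound you use exactly the row-sum (Gershgorin) argument the paper gestures at: each row of $B$ indexed by $(u,v)$ sums to $\deg(v)-1\leq d_{\max}-1$. Where you go beyond the paper is the equality characterization, which the paper asserts but never proves. Your treatment is sound: the ``if'' direction follows from $B\mathbf{1}=(d-1)\mathbf{1}$; the ``only if'' direction in the irreducible case (Proposition \ref{thm:irreducible}) correctly invokes the sharp form of the Perron--Frobenius row-sum bounds, under which $\rho(B)=\max_i r_i$ forces all row sums, hence all vertex degrees, to coincide; and the residual case of a graph with both a cycle and a leaf is handled cleanly by stripping pendant trees via Corollary \ref{thm:adding-trees} to reach the $2$-core $G^*$, noting $\rho(B_G)=\rho(B_{G^*})\leq d_{\max}(G^*)-1\leq d_{\max}(G)-1$, so equality would force $G^*$ to be $d_{\max}(G)$-regular while any attachment vertex of a pendant tree would then exceed degree $d_{\max}(G)$ in $G$, a contradiction. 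The tree case (equality only for $K_2$, which is $1$-regular) and the cycle case are also consistent. In short, your argument matches the paper's approach on the bound and supplies a complete proof of the equality statement that the paper omits.
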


We will show a stronger bound using the matrix $K$ and its relationship with $B$.
Immediately from the structure of $K$, we can learn some information about its eigenvalue-eigenvector pairs.

\begin{proposition}
Let $G$ be a graph and $K$ as defined above. Then the following are true:
\begin{enumerate}[(i)]
\item Every eigenvector of $K$ is of the form $\begin{bmatrix}-\mu \mathbf{y}&\mathbf{y}\end{bmatrix}^T$ where $\mu\in\sigma(K)$,
\item $1\in\sigma(K)$ with geometric multiplicity equal to the number of connected components of $G$,
\item the nullity of $K$ is the number of degree 1 vertices, and
\item $K$ is invertible with inverse $K^{-1}=\begin{bmatrix}0&-I\\(D-I)^{-1}&(D-I)^{-1}A\end{bmatrix}$ if and only if $d_{\min}\geq 2$.
\end{enumerate}
\label{thm:props-of-k}
\end{proposition}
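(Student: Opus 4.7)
The four parts all flow from one observation: the block structure of $K$ forces every eigenvector to be determined by its bottom half. So I would open the proof by setting $K\begin{bmatrix}\mathbf{x}\\ \mathbf{y}\end{bmatrix}=\mu\begin{bmatrix}\mathbf{x}\\ \mathbf{y}\end{bmatrix}$ and reading off the bottom block, which says $-\mathbf{x}=\mu\mathbf{y}$, i.e.\ $\mathbf{x}=-\mu\mathbf{y}$. This is (i). I would then record the top-block equation $A\mathbf{x}+(D-I)\mathbf{y}=\mu\mathbf{x}$, which after substituting $\mathbf{x}=-\mu\mathbf{y}$ becomes
\[
\bigl(\mu^2 I - \mu A + (D-I)\bigr)\mathbf{y}=0,
\]
consistent with the characteristic equation advertised for $K$.

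For (ii), I would plug $\mu=1$ into this reduced equation, obtaining $(I-A+D-I)\mathbf{y}=(D-A)\mathbf{y}=L\mathbf{y}=0$, where $L$ is the graph Laplacian. Since the nullity of $L$ is exactly the number of connected components of $G$, and since (i) guarantees each such $\mathbf{y}$ gives a unique eigenvector $\begin{bmatrix}-\mathbf{y}\\ \mathbf{y}\end{bmatrix}$ of $K$, the geometric multiplicity of the eigenvalue $1$ equals the number of components. For (iii), I would set $\mu=0$; (i) forces $\mathbf{x}=0$, and the top block reduces to $(D-I)\mathbf{y}=0$. Since $D-I$ is diagonal with entries $d_i-1$, the kernel is spanned by the indicator vectors of degree-one vertices, so $\dim\ker K$ is exactly the number of degree-one vertices.

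For (iv), the forward direction is immediate from (iii): invertibility of $K$ rules out degree-one vertices, and (assuming $G$ has no isolated vertices, which is the standing hypothesis in this paper) this is equivalent to $d_{\min}\geq 2$. Conversely, if $d_{\min}\geq 2$, then $D-I$ is a diagonal matrix with strictly positive entries and is therefore invertible, which is precisely what lets the claimed formula make sense. I would verify the formula directly by block multiplication:
\[
\begin{bmatrix}A & D-I\\ -I & 0\end{bmatrix}\begin{bmatrix}0 & -I\\ (D-I)^{-1} & (D-I)^{-1}A\end{bmatrix}
=\begin{bmatrix}(D-I)(D-I)^{-1} & -A+(D-I)(D-I)^{-1}A\\ 0 & I\end{bmatrix}=I.
\]
A symmetric check on the other side confirms it is a two-sided inverse.

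The only mild subtlety, and the one place I would be careful, is the interaction between parts (iii) and (iv): part (iii) literally gives $K$ singular iff there exists a degree-one vertex, which strictly speaking could be stated under the weaker condition $d_{\min}\neq 1$ rather than $d_{\min}\geq 2$. I would note explicitly that we work under the blanket assumption that $G$ has no isolated vertices (so $d_{\min}\geq 1$), under which "no degree-one vertex" and "$d_{\min}\geq 2$" coincide, making the biconditional in (iv) clean. Beyond that bookkeeping, every step is a direct computation, so I do not expect any real obstacle.
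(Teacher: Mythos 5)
Your proof is correct and follows essentially the same route as the paper's: read the bottom block to get (i), reduce the top block to $(\mu^2I-\mu A+(D-I))\mathbf{y}=0$, identify the $\mu=1$ eigenspace with $\mathrm{Null}(L)$ for (ii), identify the kernel with $\mathrm{Null}(D-I)$ for (iii), and verify the inverse by block multiplication for (iv). Your remark that (iii) literally characterizes singularity by the presence of degree-one vertices, so that the biconditional in (iv) needs the standing assumption of no isolated vertices, is a genuine (if minor) point of care that the paper's own proof passes over silently.
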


\begin{proof}
\begin{enumerate}[(i)]
\item Assume that $\mu\in\sigma(K)$ with eigenvector $\begin{bmatrix}\mathbf{x}&\mathbf{y}\end{bmatrix}^T$. Then
\begin{align*}
    \mu\begin{bmatrix}\mathbf{x}\\\mathbf{y}\end{bmatrix}=\begin{bmatrix}A&D-I\\-I&0\end{bmatrix}\begin{bmatrix}\mathbf{x}\\\mathbf{y}\end{bmatrix}=\begin{bmatrix}A\mathbf{x}+(D-I)\mathbf{y}\\-\mathbf{x}\end{bmatrix}.
\end{align*}
Then bottom block gives that $\mathbf{x}=-\mu \mathbf{y}$, so the eigenvector must be of the form $\begin{bmatrix}-\mu \mathbf{y}&\mathbf{y}\end{bmatrix}$.
\item Let $\mathbf{x}=\begin{bmatrix}\mathbf{1}&-\mathbf{1}\end{bmatrix}$ where $\mathbf{1}\in M_{n\times 1}$ is an all-ones vector. Then
\begin{align*}
    K\mathbf{x}=\begin{bmatrix}A\mathbf{1}-(D-I)\mathbf{1}\\-\mathbf{1}\end{bmatrix}=\begin{bmatrix}\mathbf{k}-\mathbf{k}+\mathbf{1}\\-\mathbf{1}\end{bmatrix}=\mathbf{x}
\end{align*}
where $\mathbf{k}$ is the degree vector of $G$. Thus, $K\mathbf{x}=\mathbf{x}$ and $1\in\sigma(K)$.

Let $(1,\begin{bmatrix}-\mathbf{y}&\mathbf{y}\end{bmatrix}^T)$ be a general eigenvalue-eigenvector pair for $1\in\sigma(K)$.
Then $\mathbf{y}-A\mathbf{y}+(D-I)\mathbf{y}=0$.
Rearranging we see that $(D-A)\mathbf{y}=0$.
So $\mathbf{y}\in\text{Null}(L)$ where $L$ is the Laplacian of $G$.
So $\text{mult}(1)\leq\text{Nullity}(L)$.

Now assume that $\mathbf{z}\in\text{Null}(L)$.
Then $(D-A)\mathbf{z}=0$.
So $\mathbf{z}-A\mathbf{z}+(D-I)\mathbf{z}=0$.
Then we see that 
\[K\begin{pmatrix}-\mathbf{z}&\mathbf{z}\end{pmatrix}^T=\begin{pmatrix}-A\mathbf{z}+(D-I)\mathbf{z}\\\mathbf{z}\end{pmatrix}=\begin{pmatrix}-\mathbf{z}&\mathbf{z}\end{pmatrix}^T.\]
Thus the geometric multiplicity of $1\in\sigma(K)$ is the nullity of $L$.
By well-known properties of the Laplacian (see \cite{brouwer2011spectra}), the geometric multiplicity of $1\in\sigma(K)$ is the number of connected components in $G$.
\item We have \[\begin{bmatrix}A&D-I\\-I&0\end{bmatrix}\begin{bmatrix}0\\\mathbf{y}\end{bmatrix}=\begin{bmatrix}0\\0\end{bmatrix}\]
if and only if \[\begin{bmatrix}(D-I)\mathbf{y}\\0\end{bmatrix}=\begin{bmatrix}0\\0\end{bmatrix}.\]  Thus $[0 \ \mathbf{y}]^T$ is in the nullspace of $K$ if and only if $(D-I)\mathbf{y}=0$.  Note that $D-I$ is diagonal, and so the dimension of its nullspace is equal to the number of diagonal entries that are 0.  From this, the result follows.
\item Immediately from (iii) we that $K$ is invertible if and only if $d_{\min}\geq 2$. If $d_{\min}\geq 2$, then
\begin{align*}
    \begin{bmatrix}A&D-I\\-I&0\end{bmatrix}\begin{bmatrix}0&-I\\(D-I)^{-1}&(D-I)^{-1}A\end{bmatrix}&=\begin{bmatrix}I&0\\0&I\end{bmatrix}\\
    \begin{bmatrix}0&-I\\(D-I)^{-1}&(D-I)^{-1}A\end{bmatrix}\begin{bmatrix}A&D-I\\-I&0\end{bmatrix}&=\begin{bmatrix}I&0\\0&I\end{bmatrix}.
\end{align*}
So $K^{-1}=\begin{bmatrix}0&-I\\(D-I)^{-1}&(D-I)^{-1}A\end{bmatrix}.$ 
\end{enumerate}
\end{proof}

\begin{remark}
Proposition \ref{thm:props-of-k}(iii) can also be proved using the relationship between $B$ and $K$ and results about the invertibility of $B$ found in \cite{torres2020non}.
\end{remark}

With Proposition $\ref{thm:props-of-k}$(i), we can create a relationship between the eigenvalues of $A$ and the eigenvalues of $B$.
\begin{proposition}
Let $\mu\in\sigma(K)$ with eigenvector $\begin{bmatrix}-\mu \mathbf{y}&\mathbf{y}\end{bmatrix}^T$ and let $\lambda\in\sigma(A)$ such that $A\mathbf{x}=\lambda \mathbf{x}$.
If $\mathbf{x}^T\mathbf{y}\neq 0$, then
\[\mu=\frac{\lambda\pm\sqrt{\lambda^2-4\mathbf{x}^T(D-I)\mathbf{y}}}{2}.\]
\label{thm:mu-equation}
\end{proposition}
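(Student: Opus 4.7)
The plan is to reduce the eigenvalue equation for $K$ to a scalar quadratic in $\mu$ by pairing it against the eigenvector $\mathbf{x}$ of the symmetric adjacency matrix $A$, and then to solve that quadratic.

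First, I would write out what the eigenvalue equation $K\begin{bmatrix}-\mu\mathbf{y}\\\mathbf{y}\end{bmatrix}=\mu\begin{bmatrix}-\mu\mathbf{y}\\\mathbf{y}\end{bmatrix}$ says in its top block. Since the bottom block is already built into the form of the eigenvector (by Proposition \ref{thm:props-of-k}(i)), only the top block carries new information, and it reads
\[
A(-\mu\mathbf{y})+(D-I)\mathbf{y}=-\mu^{2}\mathbf{y},
\]
which after rearrangement gives the vector identity
\[
\mu^{2}\mathbf{y}-\mu A\mathbf{y}+(D-I)\mathbf{y}=\mathbf{0}.
\]
This is the key relation: every eigenvector of $K$ of the given form encodes a solution of this quadratic-in-$\mu$ vector equation.

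Next, I would take the inner product of this identity with the adjacency eigenvector $\mathbf{x}$, yielding
\[
\mu^{2}(\mathbf{x}^{T}\mathbf{y})-\mu(\mathbf{x}^{T}A\mathbf{y})+\mathbf{x}^{T}(D-I)\mathbf{y}=0.
\]
Here is where the symmetry of $A$ is crucial: $\mathbf{x}^{T}A\mathbf{y}=(A\mathbf{x})^{T}\mathbf{y}=\lambda\mathbf{x}^{T}\mathbf{y}$. Substituting this turns the equation into a genuine scalar quadratic in $\mu$ with coefficients depending only on $\lambda$, $\mathbf{x}^{T}\mathbf{y}$, and $\mathbf{x}^{T}(D-I)\mathbf{y}$.

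Finally, under the hypothesis $\mathbf{x}^{T}\mathbf{y}\neq 0$, I may normalize the eigenvectors so that $\mathbf{x}^{T}\mathbf{y}=1$ (or equivalently divide through by $\mathbf{x}^{T}\mathbf{y}$) and apply the quadratic formula to solve
\[
\mu^{2}-\lambda\mu+\mathbf{x}^{T}(D-I)\mathbf{y}=0,
\]
obtaining the stated expression for $\mu$. The only subtle point is ensuring that the pairing with $\mathbf{x}$ does not lose information, which is exactly what the condition $\mathbf{x}^{T}\mathbf{y}\neq 0$ guarantees: without it, the quadratic degenerates and $\mu$ cannot be recovered from $\lambda$ alone. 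I do not expect any serious obstacle beyond keeping the normalization of $\mathbf{x}$ and $\mathbf{y}$ consistent so that the coefficient of $\mu^{2}$ is $1$.
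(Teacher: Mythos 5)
Your proposal is correct and follows essentially the same route as the paper's proof: derive the vector identity $\mu^{2}\mathbf{y}-\mu A\mathbf{y}+(D-I)\mathbf{y}=0$ from the top block of the eigenvalue equation, left-multiply by $\mathbf{x}^{T}$ using the symmetry of $A$ so that $\mathbf{x}^{T}A\mathbf{y}=\lambda\mathbf{x}^{T}\mathbf{y}$, normalize so $\mathbf{x}^{T}\mathbf{y}=1$, and apply the quadratic formula. Your explicit remark about why the symmetry of $A$ is needed is a useful clarification that the paper leaves implicit, but the argument is the same.
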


\begin{proof}
Recall that $\mu^2\mathbf{y}-\mu A\mathbf{y}+(D-I)\mathbf{y}=0$ where $\mathbf{y}$ is the second half of the eigenvector $\begin{bmatrix}-\mu \mathbf{y}&\mathbf{y}\end{bmatrix}^T$ of $K$ corresponding to eigenvalue $\mu$.
Let $\mathbf{x}^T$ be an eigenvector of $A$ with associated eigenvalue $\lambda$, where $\mathbf{x}$ and $\mathbf{y}$ are not orthogonal, and scale $\mathbf{y}$ such that $\mathbf{x}^T\mathbf{y}=1$.
Then left multiplying by $\mathbf{x}^T$ gives $\mu^2-\mu\lambda+\mathbf{x}^T(D-I)\mathbf{y}=0$.
With the quadratic formula, we know that
\[\mu=\frac{\lambda\pm\sqrt{\lambda^2-4\mathbf{x}^T(D-I)\mathbf{y}}}{2}.\]
\end{proof}

\begin{remark}
Note that this formula does not necessarily give every eigenvalue $\mu$.
If $K$ is not diagonalizable (for example, $K$ of any cycle), then there will exist $\mu\in\sigma(K)$ with no corresponding eigenvector $\begin{bmatrix}-\mu \mathbf{y}&\mathbf{y}\end{bmatrix}^T$.
Additionally we are not guaranteed that $\mathbf{x}^T\mathbf{y}\neq 0$ for all pairs $(\mathbf{x},\mathbf{y})$.
In fact many $\mathbf{x}$ and $\mathbf{y}$ exist such that $\mathbf{x}^T\mathbf{y}=0$.
However the eigenvectors of $A$ form a basis of $\mathbf{R}^n$, so for each $\mu\in\sigma(K)$ with eigenvector $\begin{bmatrix}-\mu \mathbf{y}&\mathbf{y}\end{bmatrix}^T$ there exists some $i$ such that $A\mathbf{x}_i=\lambda_i \mathbf{x}_i$ where $\mathbf{x}_i^T\mathbf{y}\neq 0$.
\end{remark}

\begin{remark}
In the case of a $d$-regular graph $D-I=(d-1)I$ which allows us to use $K$ in order to provide an alternate proof to Theorem \ref{kemp-reg-spectrum}.
To do this we need to ensure $K$ is diagonalizable.
Assume that $d\geq 2$ for a given graph $G$.
Let $\mathbf{x}$ be an eigenvector of $A$ with associated eigenvalue $\lambda$. Define $\mu_1=\frac{\lambda+\sqrt{\lambda^2-4(d-1)}}{2}$ and $\mu_2=\frac{\lambda-\sqrt{\lambda^2-4(d-1)}}{2}$. This implies that $\mu_1^2-\mu_1\lambda+(d-1)=0$ and $\mu_2^2-\mu_2\lambda+(d-1)=0$. Define the vector $\mathbf{v}_i=\begin{bmatrix}-\mu_i x&x\end{bmatrix}$. Then we see that
\begin{align*}
    K\mathbf{v}_i&=\begin{bmatrix}A&(d-1)I\\-I&0\end{bmatrix}\begin{bmatrix}-\mu_i \mathbf{x}\\\mathbf{x}\end{bmatrix}
    =\begin{bmatrix}-\mu_i \lambda \mathbf{x}+(d-1)\mathbf{x}\\\mu_i \mathbf{x}\end{bmatrix}
    =\begin{bmatrix}-\mu_i^2\mathbf{x}\\\mu_i \mathbf{x}\end{bmatrix}
    =\mu_i\mathbf{v}_i.
\end{align*}
Thus, both $\mathbf{v}_1$ and $\mathbf{v}_2$ are eigenvectors. Since $\mu_1\neq\mu_2$, we know that $\mathbf{v}_1\neq\mathbf{v}_2$. Further, we know that $A$ is diagonalizable. Thus, $\mu_i$ has the same algebraic and geometric multiplicity. Lastly, there are $2n$ distinct $\mu_i$ since each eigenvector $\mathbf{x}$ of $A$ creates two unique eigenvalue-eigenvector pairs. Since $K\in M_{2n}$, $K$ is diagonalizable.

Now that we know $K$ is diagonalizable, define $f(x)=x+\frac{1}{x}(d-1)$.
Let $\mu\in\sigma(K)$ be an eigenvalue of $K$.
Recall all eigenvectors of $K$ can be written as $\begin{bmatrix}-\mu \mathbf{y}&\mathbf{y}\end{bmatrix}^T$, implying $-\mu A\mathbf{y}+(d-1)\mathbf{y}=-\mu^2\mathbf{y}$. 
Also note that $d\geq 2$, so by Proposition \ref{thm:props-of-k}(iii) $\mu\neq 0$ and $A\mathbf{y}=(\mu+\frac{1}{\mu}(d-1))\mathbf{y}$. So there exists some $\lambda\in\sigma(A)$ such that $\mu+\frac{1}{\mu}(d-1)=\lambda$. Thus, $f(\sigma(K))\subset\sigma(A)$.

We also know there are two solutions to the equation $\mu+\frac{1}{\mu}(d-1)=\lambda$:
\[\mu=\frac{\lambda\pm\sqrt{\lambda^2-4(d-1)}}{2}.\]
Let $\mu_1,\mu_2$ be the two solutions to this equation where $\mu_1$ and $\mu_2$ are the plus and minus solutions respectively.
Assume that $\begin{bmatrix}-\mu_1\mathbf{y}&\mathbf{y}\end{bmatrix}^T$ is an eigenvector of $K$. 
Then we know that $A\mathbf{y}=(\mu_1+\frac{1}{\mu_1}(d-1))\mathbf{y}$ and hence $A\mathbf{y}=(\mu_2+\frac{1}{\mu_2}(d-1))\mathbf{y}$. 
Rearranging we get $-\mu_2^2\mathbf{y}=-\mu_2A\mathbf{y}+(d-1)\mathbf{y}$. 
So, $K\begin{bmatrix}-\mu_2\mathbf{y}&\mathbf{y}\end{bmatrix}^T=\mu_2\begin{bmatrix}-\mu_2\mathbf{y}&\mathbf{y}\end{bmatrix}^T$. 
Similarly, if $\begin{bmatrix}-\mu_2\mathbf{y}&\mathbf{y}\end{bmatrix}^T$ is an eigenvector of $K$ with eigenvalue $\mu_2$, then $\begin{bmatrix}-\mu_1\mathbf{y}&\mathbf{y}\end{bmatrix}^T$ is an eigenvector of $K$.
Since $K$ is diagonalizable, then $\mu_1$ and $\mu_2$ have the same multiplicity.
Let $\text{mult}(\mu_i)$ be the algebraic multiplicity of $\mu_i$. 
Order the eigenvalues of $K$ such that if $\mu$ and $\hat{\mu}$ are both solutions of $\lambda=\mu+\frac{1}{\mu}(d-1)$, then $\mu=\mu_i$ and $\hat{\mu}=\mu_{-i}$. 
Then we know that $2n=\sum_{i=1}^{2n}\text{mult}(\mu_i)=\sum_{i=1}^n2(\text{mult}(\mu_i))=2\sum_{i=1}^n\text{mult}(\mu_i)$. 
So $n=\sum_{i=1}^n\text{mult}(\mu_i)$. 
Then we know that $|f(\sigma(K))|=\sum_{i=1}^n\text{mult}(\mu_i)=n$. 
Since $|\sigma(A)|=n$ and $f(\sigma(K))\subseteq\sigma(A)$, $\sigma(A)=f(\sigma(K))$.
\end{remark}

Our goal now is to use Proposition \ref{thm:mu-equation} to bound the spectrum of $B$.
We begin by bounding the "bottom" of the spectrum using a formula similar to that of Proposition \ref{thm:mu-equation} but not quite equal.

\begin{theorem}
Let $G$ be a connected graph where $d_{\min}\geq 2$. Then $|\mu|\geq 1$ for all $\mu\in\sigma(B)$.
\label{thm:smallest-eig}
\end{theorem}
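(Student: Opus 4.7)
The plan is to analyze $\sigma(B)$ by splitting into trivial and non-trivial eigenvalues and then, for the non-trivial ones, handling non-real and real $\mu$ separately. The eigenvalues $\pm 1$ of $B$ (from the $\pm I$ blocks in Theorem \ref{thm:almost-similarity}) already satisfy $|\mu|=1$. By Proposition \ref{thm:props-of-k}(iv), $K$ is invertible when $d_{\min}\ge 2$, so $0\notin\sigma(K)$. For any $\mu\in\sigma(K)$, Proposition \ref{thm:props-of-k}(i) gives an eigenvector of the form $[-\mu\mathbf{y},\mathbf{y}]^T$, and then $\mathbf{y}$ satisfies the quadratic pencil equation $(\mu^2 I-\mu A+(D-I))\mathbf{y}=0$.

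Taking the Hermitian inner product of this equation with $\mathbf{y}$ gives a scalar quadratic $\mu^2-\alpha\mu+\delta=0$, where $\alpha=\mathbf{y}^*A\mathbf{y}/\|\mathbf{y}\|^2\in\mathbb{R}$ and $\delta=\mathbf{y}^*(D-I)\mathbf{y}/\|\mathbf{y}\|^2\ge d_{\min}-1\ge 1$; this is the ``formula similar to that of Proposition \ref{thm:mu-equation} but not quite equal'' alluded to just before the theorem. If $\mu\notin\mathbb{R}$, this real-coefficient quadratic has $\mu$ and $\bar\mu$ as its two roots, and the product-of-roots relation yields $|\mu|^2=\mu\bar\mu=\delta\ge 1$.

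The real case is harder, because the inner product only bounds the product of the two real roots, not the smaller-modulus root individually. To resolve this I would read the pencil equation componentwise: $(\mu^2+d_i-1)y_i=\mu(A\mathbf{y})_i$. Using $\mu^2+d_i-1>0$ (which holds since $d_i\ge 2$), set $\phi_i=\mu/(\mu^2+d_i-1)$ and $\Phi=\operatorname{diag}(\phi_i)$, so the equation becomes the fixed-point relation $\mathbf{y}=\Phi A\mathbf{y}$. Supposing for contradiction that $0<|\mu|<1$, the factorization $\mu^2-|\mu|d_i+d_i-1=(|\mu|-1)(|\mu|-(d_i-1))$ is strictly positive (both factors are negative because $|\mu|<1\le d_i-1$), which gives $|\phi_i|d_i<1$ for every $i$. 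Every row sum of the nonnegative matrix $|\Phi A|$ is then strictly less than $1$, so $\rho(\Phi A)\le\rho(|\Phi A|)<1$, contradicting the existence of the nonzero fixed point $\mathbf{y}$ of $\Phi A$.

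The main obstacle is exactly this real case: the clean inner-product identity does not close it, so a genuinely separate componentwise Perron--Frobenius argument is required. The hypothesis $d_{\min}\ge 2$ is essential rather than merely convenient, because it is what forces $d_i-1\ge 1$, which in turn makes the two factors in $(|\mu|-1)(|\mu|-(d_i-1))$ have the same sign when $|\mu|<1$ and so drives the row-sum bound.
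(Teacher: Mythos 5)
Your proof is correct, and while it opens the same way as the paper's, it closes the hard case by a genuinely different argument. Both proofs start from the pencil identity $(\mu^2I-\mu A+(D-I))\mathbf{y}=0$ for $\mu\in\sigma(K)$, contract it against $\mathbf{y}$ to get a scalar quadratic $\mu^2-\alpha\mu+\delta=0$ with $\delta\geq d_{\min}-1\geq 1$, and dispose of the non-real case by noting that the product of the two roots is $\delta$ (the paper phrases this as the negative-discriminant case and computes the modulus directly; your Vieta formulation is the same fact, and your use of the Hermitian form $\mathbf{y}^{*}$ rather than the paper's $\mathbf{y}^{T}$ is the cleaner choice when $\mathbf{y}$ is complex). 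The divergence is in the real case, where the quadratic only controls the product of the two roots: the paper stays with the scalar identity, shows the map $g(x)=\bigl|\bigl(x-\sqrt{x^2-4r}\bigr)/2\bigr|$ is non-increasing, and invokes positive semidefiniteness of the Laplacian to get $\mathbf{y}^{T}A\mathbf{y}\leq\mathbf{y}^{T}D\mathbf{y}=r+1$ and hence $g(\mathbf{y}^{T}A\mathbf{y})\geq g(r+1)=1$; you instead abandon the quadratic entirely, rewrite the pencil componentwise as the fixed-point equation $\mathbf{y}=\Phi A\mathbf{y}$ with $\Phi=\operatorname{diag}\bigl(\mu/(\mu^2+d_i-1)\bigr)$, and derive a contradiction from $\rho(\Phi A)\leq\max_i|\phi_i|d_i<1$, which follows from the factorization $(|\mu|-1)\bigl(|\mu|-(d_i-1)\bigr)>0$ when $|\mu|<1$. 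Your route makes the role of $d_{\min}\geq2$ completely transparent, needs nothing about the Laplacian, and avoids the bookkeeping over which of the two quadratic roots $\mu$ is; the paper's route has the virtue of never leaving the single Rayleigh-quotient identity. Both arguments are complete (you correctly rule out $\mu=0$ via the invertibility of $K$, which the paper leaves implicit).
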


\begin{proof}
Recall that any $\mu\neq\pm 1$ satisfies the equation $\mu^2\mathbf{y}-\mu A\mathbf{y}+(D-I)\mathbf{y}=0$ for some vector $\mathbf{y}\neq 0$.
Assume that $\|\mathbf{y}\|_2=1$.
Then left multiplying by $\mathbf{y}^T$ gives
$\mu^2-\mu\mathbf{y}^TA\mathbf{y}+\mathbf{y}^T(D-I)\mathbf{y}=0.$
The quadratic equation then gives
\[\mu=\frac{\mathbf{y}^TA\mathbf{y}\pm\sqrt{(\mathbf{y}^TA\mathbf{y})^2-4\mathbf{y}^T(D-I)\mathbf{y}}}{2}.\]
We work by cases:
\begin{enumerate}
    \item Assume that $(\mathbf{y}^TA\mathbf{y})^2<4\mathbf{y}^T(D-I)\mathbf{y}$.
    Then
    \begin{align*}
        |\mu|=\frac{(\mathbf{y}^TA\mathbf{y})^2}{4}+\mathbf{y}^T(D-I)\mathbf{y}-\frac{(\mathbf{y}^TA\mathbf{y})^2}{4}=\mathbf{y}^TD\mathbf{y}-1\geq 2-1=1.
    \end{align*}
    \item Assume that $(\mathbf{y}^TA\mathbf{y})^2=4\mathbf{y}^T(D-I)\mathbf{y}$.
    Then
     \[|\mu|=\Bigg|\frac{\mathbf{y}^TA\mathbf{y}}{2}\Bigg|=\Bigg|\sqrt{\mathbf{y}^T(D-I)\mathbf{y}}\Bigg|\geq \sqrt{2}\geq1.\]
     \item Assume that $(\mathbf{y}^TA\mathbf{y})^2>4\mathbf{y}^T(D-I)\mathbf{y}$.
     If $\mu=\frac{\mathbf{y}^TA\mathbf{y}+\sqrt{(\mathbf{y}^TA\mathbf{y})^2-4\mathbf{y}^T(D-I)\mathbf{y}}}{2}$, then
    \[|\mu|=\Bigg|\frac{\mathbf{y}^TA\mathbf{y}+\sqrt{(\mathbf{y}^TA\mathbf{y})^2-4\mathbf{y}^T(D-I)\mathbf{y}}}{2}\Bigg|\geq\Bigg|\frac{\mathbf{y}^TA\mathbf{y}}{2}\Bigg|=|\sqrt{\mathbf{y}^T(D-I)\mathbf{y}}|\geq 1.\]
    Assume $\mu=\frac{\mathbf{y}^TA\mathbf{y}-\sqrt{(\mathbf{y}^TA\mathbf{y})^2-4\mathbf{y}^T(D-I)\mathbf{y}}}{2}$.
    Define the function \[g(x)=\Bigg|\frac{x-\sqrt{x^2-4r}}{2}\Bigg|\] on the domain $(-\infty,-2\sqrt{r}]\cup[2\sqrt{r},\infty)$ where $r=\mathbf{y}^T(D-I)\mathbf{y}$.
    This function is clearly non-increasing.
    We also know that $(\mathbf{y}^TA\mathbf{y})^2>4\mathbf{y}^T(D-I)\mathbf{y}$.
    So either $-\mathbf{y}^TA\mathbf{y}>2\sqrt{r}$ or $\mathbf{y}^TA\mathbf{y}>2\sqrt{r}$.
    
    First assume that $-\mathbf{y}^TA\mathbf{y}>2\sqrt{r}$.
    Thus, $\mathbf{y}^TA\mathbf{y}<0$.
    Since $r\geq 1$, we know that $\mathbf{y}^TA\mathbf{y}<2\sqrt{r}$.
    Since $g$ is non-increasing, then $g(\mathbf{y}^TA\mathbf{y})>g(2\sqrt{r})\geq 1$.
    Thus, $|\mu|\geq 1$.
    
    Now assume that $\mathbf{y}^TA\mathbf{y}>2\sqrt{r}$.
    Note that if $g(\mathbf{y}^TA\mathbf{y})=1$, then
    \begin{align*}
        1&=\frac{\mathbf{y}^TA\mathbf{y}-\sqrt{(\mathbf{y}^TA\mathbf{y})^2-4\mathbf{y}^T(D-I)\mathbf{y}}}{2}\\
        \sqrt{(\mathbf{y}^TA\mathbf{y})^2-4\mathbf{y}^T(D-I)\mathbf{y}}&=\mathbf{y}^TA\mathbf{y}-2\\
        (\mathbf{y}^TA\mathbf{y})^2-4\mathbf{y}^T(D-I)\mathbf{y}&=(\mathbf{y}^TA\mathbf{y})^2-4\mathbf{y}^TA\mathbf{y}+4\\
        \mathbf{y}^T(A-(D-I))\mathbf{y}&=1\\
        \mathbf{y}^T(D-A)\mathbf{y}&=0.
    \end{align*}
    Thus, if $g(\mathbf{y}^TA\mathbf{y})=1$, then $\mathbf{y}^TA\mathbf{y}=\mathbf{y}^TD\mathbf{y}$.
    Further, we know that the Laplacian is positive semidefinite \cite{chen2016characterizing}, so $\mathbf{y}^TD\mathbf{y}\geq\mathbf{y}^TA\mathbf{y}$.
    Since $g$ is non-increasing, then $g(\mathbf{y}^TA\mathbf{y})\geq 1$ for all $\mathbf{y}$.
    So $|\mu|\geq 1$.
\end{enumerate}
Thus, $|\mu|\geq 1$ for all $\mu\in\sigma(B)$.
\end{proof}

\begin{remark}
An alternate proof of Theorem \ref{thm:smallest-eig} without using $K$ can be found in \cite{torres2020non}.
\end{remark}

With the fact that the modulus of all $\mu\in\sigma(B)$ are bounded below by one, we turn our attention to the spectral radius.
Our first goal is to show that the spectral radius of $B$ is strictly greater than 1 when $G$ is not a cycle and has no dangling vertexs.

\begin{proposition}
Let $G$ be a connected graph such that $G$ is not a tree or cycle and $d_{\min}\geq 2$. Then $\rho(K)>1$.
\label{thm:lower-bound-spectral-radius}
\end{proposition}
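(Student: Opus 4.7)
The strategy is to reduce the bound $\rho(K) > 1$ to the analogous bound $\rho(B) > 1$, and then establish the latter by applying Perron--Frobenius to the matrix $B$.

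First, under the hypotheses (connected, not a tree or cycle, $d_{\min} \geq 2$), Proposition \ref{thm:irreducible} gives that $B$ is irreducible; combined with the nonnegativity of $B$, this makes the Perron--Frobenius theorem available. The row of $B$ indexed by the directed edge $(u,v)$ has a $1$ in each column $(v,y)$ with $y \sim v$ and $y \neq u$, and a $0$ everywhere else, so its row sum equals $d(v)-1$. The row sums of $B$ are therefore exactly the numbers $d(v)-1$ as $v$ ranges over vertices of $G$, each value $d(v)-1$ occurring $d(v)$ times.

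Next I argue $\rho(B) > 1$ by splitting on $d_{\min}$. If $d_{\min} \geq 3$, every row sum of $B$ is at least $2$, and the elementary bound $\rho(B) \geq \min_i r_i$ already gives $\rho(B) \geq 2$. If $d_{\min} = 2$, then some row sum of $B$ equals $1$; but because $G$ is not a cycle, some vertex has degree at least $3$, producing a row sum of $B$ of at least $2$. Hence the row sums of $B$ are not constant, and the strict form of Perron--Frobenius for irreducible nonnegative matrices applies. One obtains this strict form by considering the (positive) left Perron eigenvector $\mathbf{v}$ and using
\begin{equation*}
\rho(B) \;=\; \frac{\mathbf{v}^T B \mathbf{1}}{\mathbf{v}^T \mathbf{1}},
\end{equation*}
which expresses $\rho(B)$ as a positive weighted average of the row sums; since the row sums are non-constant this average lies strictly between their min and max, giving $\rho(B) > \min_i r_i = 1$.

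Finally, I transfer the bound to $K$. Ihara's Theorem (Theorem \ref{thm:ihara}) together with the identity $\det(\mu I - K) = \det(\mu^2 I - \mu A + (D-I))$ from the start of Section 4 yields the factorization
\begin{equation*}
\det(\mu I - B) \;=\; (\mu-1)^{m-n}(\mu+1)^{m-n}\det(\mu I - K),
\end{equation*}
so every eigenvalue of $B$ is either $\pm 1$ or an eigenvalue of $K$. Because $\rho(B) > 1$, the spectral radius of $B$ cannot be attained by the eigenvalues $\pm 1$, so it must be attained by some eigenvalue of $K$. Therefore $\rho(K) \geq \rho(B) > 1$.

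\textbf{Main obstacle.} The non-strict bound $\rho(K) \geq 1$ is essentially immediate from Theorem \ref{thm:smallest-eig}; the substantive step is upgrading it to a strict inequality. This is precisely where the hypothesis that $G$ is not a cycle is used, via the observation that the row sums of $B$ are constant exactly when $G$ is (a disjoint union of) cycles, so that excluding the cycle case is what activates the strict form of Perron--Frobenius.
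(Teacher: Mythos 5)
Your proof is correct, and its core is the same as the paper's: apply Perron--Frobenius to the irreducible nonnegative matrix $B$ (irreducibility from Proposition \ref{thm:irreducible}), bound $\rho(B)$ from below via row sums, and then transfer the bound to $K$ through Ihara's Theorem. The differences are in the case analysis and in self-containment. The paper splits on whether $G$ is $d$-regular: in the non-regular case it cites an external reference for the strict inequality $\rho(B)>\min_i\sum_j b_{ij}$, and in the regular case it falls back on the explicit spectrum formula of Theorem \ref{kemp-reg-spectrum} to get $\rho(B)\geq 2$. You instead split on $d_{\min}\geq 3$ versus $d_{\min}=2$, which lets you avoid the regular-graph spectrum theorem entirely: when $d_{\min}\geq 3$ the non-strict row-sum bound already gives $\rho(B)\geq 2$, and when $d_{\min}=2$ the ``not a cycle'' hypothesis forces non-constant row sums, at which point your left-Perron-eigenvector averaging argument supplies exactly the strict inequality the paper imports from the literature. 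Your version is arguably cleaner in that it makes explicit where non-constancy of the row sums is needed (the cited strict bound fails for constant row sums, which is why the paper must treat the regular case separately), and it proves the key inequality rather than citing it. Your final transfer step $\rho(K)\geq\rho(B)>1$ via the factorization $\det(\mu I-B)=(\mu^2-1)^{m-n}\det(\mu I-K)$ is also sound and matches the paper's implicit use of $\rho(B)=\rho(K)$.
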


\begin{proof}
Let $B$ be the non-backtracking matrix.
Note that $\rho(B)=\rho(K)$ and $\rho(K)\geq 1$ since $\sigma(K)\subset\sigma(B)$ and $1\in\sigma(K)$.
Assume that $G$ is not $d$-regular.
Since $B$ is nonnegative and irreducible by Proposition \ref{thm:irreducible} and $d_{\min}\geq 2$, then \cite{liu2010some} tells us that $\rho(B)>\min_i\sum_jb_{ij}\geq 1$.
Assume that $G$ is $d$-regular.
Since $G$ is not a cycle, $d\geq 3$.
Then by Theorem \ref{kemp-reg-spectrum},
\[\rho(B)=\frac{d+\sqrt{d^2-4(d-1)}}{2}\geq\frac{3+1}{2}=2>1.\]
So $\rho(B)>1$.
Since $\rho(B)=\rho(K)$, then $\rho(K)>1$.
\end{proof}

We now turn our attention to upper bounds on the spectrum of $B$.
To do this, we employ Proposition \ref{thm:mu-equation}.
Recall that the necessary condition for the proposition was that $\mathbf{x}^T\mathbf{y}\neq 0$ for some eigenvalue-eigenvector pairs $(\lambda,\mathbf{x})$ and $(\mu,\begin{bmatrix}-\mu \mathbf{y}&\mathbf{y}\end{bmatrix}^T)$ of $A$ and $K$ respectively.
To obtain this condition, we show that $\mathbf{y}$ is a positive vector for the eigenvalue-eigenvector pair $(\rho(K),\begin{bmatrix}-\rho(K)\mathbf{y}&\mathbf{y}\end{bmatrix}^T)$.

\begin{lemma}
Let $G$ be a connected graph such that $G$ is not a tree or cycle and $d_{\min}\geq 2$. Then $y$ is positive for the eigenvector $\begin{pmatrix}-\rho(K)\mathbf{y}&\mathbf{y}\end{pmatrix}^T$.
\label{thm:y-positive}
\end{lemma}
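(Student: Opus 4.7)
The plan is to leverage Perron--Frobenius on $B$ itself and push the positivity back through the decomposition $B[S\ T^T]=[S\ T^T]K$ to obtain the sign structure of $\mathbf{y}$.

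First, I would note that since $G$ is connected, not a cycle, and $d_{\min}\geq 2$, Proposition \ref{thm:irreducible} says $B$ is irreducible. Combined with the fact that $B$ is nonnegative, Perron--Frobenius gives a simple spectral radius $\rho(B)$ with a one-dimensional eigenspace spanned by a strictly positive eigenvector $\mathbf{v}>0$. By Proposition \ref{thm:lower-bound-spectral-radius}, $\rho(B)=\rho(K)>1$, so $\rho(K)$ is a simple eigenvalue of $B$ and is not equal to $\pm 1$.

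Next, given a $K$-eigenvector $\mathbf{x}=\begin{bmatrix}-\rho(K)\mathbf{y}&\mathbf{y}\end{bmatrix}^T$ for $\rho(K)$, I form $\mathbf{v}:=[S\ T^T]\mathbf{x}=-\rho(K)S\mathbf{y}+T^T\mathbf{y}$; by identity \eqref{eqn:b-relates-k}, $\mathbf{v}$ satisfies $B\mathbf{v}=\rho(K)\mathbf{v}$. Using the explicit definitions of $S$ and $T$, the entry of $\mathbf{v}$ indexed by the directed edge $(u,v)$ is
\[
\mathbf{v}_{(u,v)}=y_u-\rho(K)y_v.
\]
I first show $\mathbf{v}\neq 0$: if every entry vanished, then for each edge $\{u,v\}$ (taken in both directions) we would have $y_u=\rho(K)y_v$ and $y_v=\rho(K)y_u$, forcing $(1-\rho(K)^2)y_u=0$. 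Since $\rho(K)>1$, this gives $y_u=0$ for every endpoint of an edge; connectedness of $G$ then forces $\mathbf{y}=0$, contradicting the fact that $\mathbf{x}$ is an eigenvector.

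Because $\mathbf{v}$ is a nonzero element of the one-dimensional eigenspace of $B$ corresponding to $\rho(K)$, it must be a nonzero scalar multiple of the positive Perron vector. Thus either $\mathbf{v}>0$ or $\mathbf{v}<0$ entrywise. Suppose first that $\mathbf{v}<0$: then $y_u<\rho(K)y_v$ for every directed edge $(u,v)$, and applying this in both directions gives $y_u<\rho(K)y_v<\rho(K)^2y_u$, i.e.\ $(\rho(K)^2-1)y_u>0$, hence $y_u>0$ for every $u$ incident to an edge, which is every vertex since $d_{\min}\geq 2$. If instead $\mathbf{v}>0$, the symmetric argument yields $\mathbf{y}<0$; in that case I replace $\mathbf{x}$ by $-\mathbf{x}$, which is another eigenvector of $K$ for $\rho(K)$ of the same form $\begin{bmatrix}-\rho(K)(-\mathbf{y})&-\mathbf{y}\end{bmatrix}^T$, and the $\mathbf{y}$ component of this choice is strictly positive. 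Either way, $\mathbf{y}$ can be taken positive.

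The only real obstacle is recognizing that one should not attempt to build the positive Perron vector of $B$ directly from a positive $\mathbf{y}$ (which fails because $y_u-\rho(K)y_v$ cannot be positive on both $(u,v)$ and $(v,u)$ when $\rho(K)>1$); instead, one must argue that the $B$-eigenvector associated with $\mathbf{y}$ must be proportional to the Perron vector, and it is the \emph{sign} of that proportionality that pins down the sign of $\mathbf{y}$.
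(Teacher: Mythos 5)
Your proposal is correct and follows essentially the same route as the paper: push the $K$-eigenvector through $B\bigl[S\ \ T^T\bigr]=\bigl[S\ \ T^T\bigr]K$ to get a $B$-eigenvector with entries $y_u-\rho(K)y_v$, invoke Perron--Frobenius via Propositions \ref{thm:irreducible} and \ref{thm:lower-bound-spectral-radius} to force that vector to have constant sign, and read off the sign of $\mathbf{y}$. Your version is in fact slightly tighter than the paper's, since you explicitly check that $-\rho(K)S\mathbf{y}+T^T\mathbf{y}\neq 0$ before applying simplicity of the Perron root, a step the paper leaves implicit.
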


\begin{proof}
Let $K\begin{bmatrix}-\rho(K)\mathbf{y}&\mathbf{y}\end{bmatrix}^T=\rho(K)\begin{bmatrix}-\rho(K)\mathbf{y}&\mathbf{y}\end{bmatrix}^T$.
Now scale $\mathbf{y}$ such that there exists some $y_k>0$.
By the Perron-Frobenius Theorem and Theorem \ref{thm:almost-similarity}, $T^T\mathbf{y}\succ \rho(K)S\mathbf{y}$ or $\rho(K)S\mathbf{y}\succ T^T\mathbf{y}$, where $S$ and $T$ are defined as in equation \ref{eqn:s-t}.
First assume that $T^T\mathbf{y}\succ \rho(K)S\mathbf{y}$.
From the definitions of $T$ and $S$, we get that $y_i\geq\rho(K)y_j$ for all $i\sim j$.
Choose $y_k>0$.
Thus for all $i$ such that $i\sim k$, $\frac{y_i}{y_k}\geq \rho(K)$.
So $y_i\neq0$ for all $i\sim k$ since $\rho(K)>1$ by Proposition \ref{thm:lower-bound-spectral-radius}.
Then by similar argument, $\frac{y_k}{y_i}\geq\rho(K)$.
This implies $\frac{y_k}{y_i}=1$ which is a contradiction since $\rho(K)>1$.

Assume that $\rho(S)\mathbf{y}\succ T^T\mathbf{y}$.
Again, choose $y_k>0$.
So for all $i$ such that $i\sim k$,
$\rho(K)\frac{y_i}{y_k}>1$.
Since $\rho(K)>1$, then $y_i$ must be positive.
Since $G$ is connected, by induction we get that $y_j>0$ for all vertices $j$.
Thus $y$ is positive.
\end{proof}

\begin{theorem}
Let $G$ be a connected graph with $A$ the adjacency matrix and $B$ the non-backtracking matrix. If $\rho(A)\geq2\sqrt{\mathbf{x}^T(D-I)\mathbf{y}}$, then
\[\rho(B)\leq\frac{\rho(A)+\sqrt{\rho(A)^2-4(d_{\min}-1)}}{2}.\]
\label{thm:upper-bound}
\end{theorem}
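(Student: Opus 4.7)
The plan is to feed Perron data of both $A$ and $K$ into Proposition \ref{thm:mu-equation} and then weaken the resulting coupling term $\mathbf{x}^T(D-I)\mathbf{y}$ down to $d_{\min}-1$ by monotonicity. First I would set up the ingredients: under the working hypotheses needed for the supporting results (implicitly $d_{\min}\geq 2$ and $G$ not a tree or cycle), Theorem \ref{thm:almost-similarity} gives $\sigma(B)=\sigma(K)\cup\{\pm 1\}$ and Proposition \ref{thm:lower-bound-spectral-radius} gives $\rho(K)>1$, so $\rho(B)=\rho(K)$. Write the $K$-eigenvector at $\rho(K)$ as $[-\rho(K)\mathbf{y},\mathbf{y}]^T$; by Lemma \ref{thm:y-positive} we may take $\mathbf{y}\succ 0$. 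Let $\mathbf{x}$ be the Perron eigenvector of $A$; since $G$ is connected, $A$ is irreducible, so $\mathbf{x}\succ 0$ as well.

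Next I would apply Proposition \ref{thm:mu-equation} to the pair $(\lambda,\mu)=(\rho(A),\rho(K))$. Strict positivity of $\mathbf{x}$ and $\mathbf{y}$ forces $\mathbf{x}^T\mathbf{y}>0$, so after rescaling $\mathbf{y}$ to make $\mathbf{x}^T\mathbf{y}=1$ the proposition gives
\[
\rho(K)=\frac{\rho(A)\pm\sqrt{\rho(A)^2-4\mathbf{x}^T(D-I)\mathbf{y}}}{2}.
\]
The hypothesis $\rho(A)\geq 2\sqrt{\mathbf{x}^T(D-I)\mathbf{y}}$ makes the discriminant nonnegative, and whichever sign is realized gives $\rho(K)\leq \tfrac{1}{2}\bigl(\rho(A)+\sqrt{\rho(A)^2-4\mathbf{x}^T(D-I)\mathbf{y}}\bigr)$.

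Finally I would weaken this to $d_{\min}-1$: positivity of $\mathbf{x}$ and $\mathbf{y}$ gives
\[
\mathbf{x}^T(D-I)\mathbf{y}=\sum_i(d_i-1)x_iy_i\geq(d_{\min}-1)\,\mathbf{x}^T\mathbf{y}=d_{\min}-1,
\]
and the map $t\mapsto \tfrac{1}{2}(\rho(A)+\sqrt{\rho(A)^2-4t})$ is strictly decreasing on $[0,\rho(A)^2/4]$. Replacing $t=\mathbf{x}^T(D-I)\mathbf{y}$ by the smaller value $d_{\min}-1$ therefore only enlarges the upper bound and produces the claim. The main obstacle is justifying the application of Proposition \ref{thm:mu-equation} at $\mu=\rho(K)$, which a priori requires \emph{some} eigenvector of $A$ not orthogonal to $\mathbf{y}$; combining Lemma \ref{thm:y-positive} with the automatic positivity of the Perron eigenvector of $A$ resolves this cleanly and, as a bonus, furnishes the pointwise lower bound on $(D-I)$ used in the final step.
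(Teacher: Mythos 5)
Your argument for the generic case is exactly the paper's: identify $\rho(B)$ with $\rho(K)$, feed the Perron pair of $A$ and the $\rho(K)$-eigenvector $\begin{bmatrix}-\rho(K)\mathbf{y}&\mathbf{y}\end{bmatrix}^T$ of $K$ into Proposition \ref{thm:mu-equation} using Lemma \ref{thm:y-positive} to guarantee $\mathbf{x}^T\mathbf{y}\neq 0$, and then relax $\mathbf{x}^T(D-I)\mathbf{y}$ to $d_{\min}-1$. You actually supply a step the paper leaves implicit: the inequality $\mathbf{x}^T(D-I)\mathbf{y}=\sum_i(d_i-1)x_iy_i\geq(d_{\min}-1)\mathbf{x}^T\mathbf{y}=d_{\min}-1$ genuinely requires the strict positivity of both Perron vectors together with the normalization $\mathbf{x}^T\mathbf{y}=1$, and the final comparison requires the monotonicity of $t\mapsto\sqrt{\rho(A)^2-4t}$; the paper asserts the chain of inequalities without spelling this out.

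The one genuine shortfall is coverage. The theorem as stated assumes only that $G$ is connected, but your argument is confined to the case $d_{\min}\geq 2$ with $G$ neither a tree nor a cycle --- you flag these as \emph{implicit hypotheses} but never discharge them, and Lemma \ref{thm:y-positive} and Proposition \ref{thm:lower-bound-spectral-radius} are simply unavailable outside that regime. The paper devotes the first half of its proof to exactly these cases: trees, where $\rho(B)=0$ by Theorem \ref{thm:b-tree}; cycles, where $\rho(B)=1$ by Theorem \ref{thm:cycle-spectrum} and $\rho(A)=2$ makes the bound an equality; and graphs with degree-$1$ vertices, which are reduced to the previous cases by stripping pendant trees via Corollary \ref{thm:adding-trees}, since that operation only adds zero eigenvalues and so leaves $\rho(B)$ unchanged. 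Without these cases your proof does not establish the statement as written; with them appended it coincides with the paper's.
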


\begin{proof}
Let $G$ be a tree.
Then $\rho(B)=0$ by Theorem \ref{thm:b-tree} and $\rho(B)\leq\frac{\rho(A)+\sqrt{\rho(A)^2-4(d_{\min}-1)}}{2}$.

Now assume that $G$ is a cycle.
Then $\rho(B)=1$ by Theorem \ref{thm:cycle-spectrum} and $\frac{2+\sqrt{4-4}}{2}=1\geq \rho(B)$.
Now assume that $G$ is a cycle with dangling vertexs.
By Corollary \ref{thm:adding-trees} $\rho(B)$ is the spectral radius of $G\backslash S$ where $S$ is the set of dangling vertexs.
Then by the work above the result holds.

Assume that $G$ is not a tree or cycle and $d_{\min}\geq 2$.
Note that $\rho(B)\geq\rho(K)$.
By the Perron-Frobenius theorem and Lemma \ref{thm:y-positive}, $\mathbf{x}^T\mathbf{y}\neq 0$.
Thus by Proposition \ref{thm:mu-equation},
\[\rho(B)\leq\rho(K)=\frac{\rho(A)\pm\sqrt{\rho(A)^2-4\mathbf{x}^T(D-I)\mathbf{y}}}{2}\leq\frac{\rho(A)+\sqrt{\rho(A)^2-4(d_{\min}-1)}}{2}.\]
Now assume that $G$ is not a tree or cycles and has at least one dangling vertex.
By Corollary \ref{thm:adding-trees} $\rho(B)$ is the spectral radius of $G\backslash S$ where $S$ is the set of all dangling vertexs.
Then by the same argument as above the result holds.

\end{proof}

There are many bounds shown for the spectral radius of the adjacency matrix of a graph \cite{das2004some,liu2008upper,nikiforov2007bounds,stanic2015inequalities}.
Using a bound provided by Das and Kumar \cite{das2004some}, we get a simple bound on the spectral radius of $B$ dependent on the minimum degree $d_{\min}$, number of vertexs $n$, and number of edges $m$.

\begin{corollary}
Let $G$ be a connected graph with $A$ the adjacency matrix and $B$ the non-backtracking matrix.
If $\rho(A)\geq 2\sqrt{\mathbf{x}^T(D-I)\mathbf{y}}$, then
\[\rho(B)\leq\frac{\sqrt{2m-n-1}+\sqrt{2m-n-4d_{\min}+3}}{2}.\]
\end{corollary}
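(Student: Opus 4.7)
The plan is to combine Theorem \ref{thm:upper-bound} with an explicit upper bound on $\rho(A)$ drawn from the adjacency spectral radius literature. By the standing hypothesis of the corollary, Theorem \ref{thm:upper-bound} applies directly and yields
\[
\rho(B) \leq \frac{\rho(A) + \sqrt{\rho(A)^2 - 4(d_{\min}-1)}}{2}.
\]
The function $f(t) = \tfrac{1}{2}\bigl(t + \sqrt{t^2 - c}\bigr)$ is strictly increasing on $t \geq \sqrt{c}$ for any fixed $c \geq 0$ (differentiation gives $f'(t) = \tfrac{1}{2}(1 + t/\sqrt{t^2-c}) > 0$), so replacing $\rho(A)$ by any valid upper bound on the right-hand side will preserve the inequality.

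The next step is to invoke the Das--Kumar bound $\rho(A) \leq \sqrt{2m - n - 1}$ from \cite{das2004some}. Inserting this into the display above gives
\[
\rho(B) \leq \frac{\sqrt{2m-n-1} + \sqrt{(2m-n-1) - 4(d_{\min}-1)}}{2},
\]
and the elementary simplification $(2m - n - 1) - 4(d_{\min}-1) = 2m - n - 4d_{\min} + 3$ produces exactly the claimed inequality.

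The only subtlety is verifying that the monotonicity substitution is legitimate, i.e.\ that the discriminant $\rho(A)^2 - 4(d_{\min}-1)$ is nonnegative so that both radicals represent real numbers and the composition of the two estimates goes in the right direction. This follows from the hypothesis $\rho(A) \geq 2\sqrt{\mathbf{x}^T(D-I)\mathbf{y}}$ together with the observation that $\mathbf{x}^T(D-I)\mathbf{y} \geq d_{\min}-1$, which holds because $D - I$ has diagonal entries bounded below by $d_{\min}-1$ and, by Lemma \ref{thm:y-positive} together with Perron--Frobenius, the vectors $\mathbf{x}$ and $\mathbf{y}$ may be taken positive and normalized so that $\mathbf{x}^T\mathbf{y} = 1$. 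Once this is in place the corollary is pure substitution, so I do not anticipate any serious obstacle beyond confirming that the hypotheses of the Das--Kumar inequality are compatible with ours (connectedness plus the absence of isolated vertices, both of which are present).
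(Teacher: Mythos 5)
Your proposal is correct and matches the paper's intended argument: the paper gives no separate proof for this corollary, but the surrounding text makes clear it is obtained exactly as you do, by substituting the Das--Kumar bound $\rho(A)\leq\sqrt{2m-n-1}$ into the bound of Theorem \ref{thm:upper-bound} and using monotonicity of $t\mapsto\tfrac{1}{2}\bigl(t+\sqrt{t^2-4(d_{\min}-1)}\bigr)$. Your added check that the discriminant stays nonnegative (via $\mathbf{x}^T(D-I)\mathbf{y}\geq d_{\min}-1$ for positive normalized $\mathbf{x},\mathbf{y}$) is a detail the paper leaves implicit, and it is a worthwhile one.
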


\subsection{Bipartite Graphs}

A bipartite graph is a graph where all vertices can be divided into two subsets, $V_1$ and $V_2$, where vertices in $V_1$ only connect to vertices in $V_2$ and vice versa.
It is widely known that the spectrum of the adjacency matrix can determine the bipartiteness of a graph.

\begin{theorem}[\cite{brouwer2011spectra}]
Let $G$ be a graph and $A$ its associated adjacency matrix.
\begin{enumerate}[(i)]
\item $G$ is bipartite if and only if, for each eigenvalue $\lambda\in\sigma(A)$, $-\lambda\in\sigma(A)$ with the same multiplicity.
\item If $G$ is connected and $\lambda_1$ is the largest eigenvalue of $A$, then $G$ is bipartite if and only if $-\lambda_1$ is an eigenvalue of $A$.
\end{enumerate}
\label{a-bipartite}
\end{theorem}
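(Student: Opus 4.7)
The plan is to handle the two parts separately, with part (i) split into its two directions. For the forward direction of (i), I would use the bipartite structure directly: if $V = V_1 \sqcup V_2$ is a bipartition, then after ordering vertices so that $V_1$ comes first, the adjacency matrix has the block form $A = \begin{bmatrix} 0 & M \\ M^T & 0\end{bmatrix}$. For any eigenpair $(\lambda, \begin{bmatrix} \mathbf{x} \\ \mathbf{y}\end{bmatrix})$, a direct computation shows that $\begin{bmatrix} \mathbf{x} \\ -\mathbf{y}\end{bmatrix}$ is an eigenvector with eigenvalue $-\lambda$. The map $\begin{bmatrix} \mathbf{x} \\ \mathbf{y}\end{bmatrix} \mapsto \begin{bmatrix} \mathbf{x} \\ -\mathbf{y}\end{bmatrix}$ is an involutive linear isomorphism between the $\lambda$- and $(-\lambda)$-eigenspaces, so the multiplicities agree.

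For the reverse direction of (i), I would use the standard trace argument: if the spectrum is symmetric about zero with matching multiplicities, then $\mathrm{tr}(A^{2k+1}) = \sum_i \lambda_i^{2k+1} = 0$ for every $k \geq 0$. But $\mathrm{tr}(A^{2k+1})$ counts the number of closed walks of odd length $2k+1$ in $G$. A graph has no closed walks of odd length if and only if it contains no odd cycle, which by a classical result is equivalent to bipartiteness.

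For part (ii), the forward direction is immediate from (i). For the reverse direction, assume $G$ is connected and $-\lambda_1 \in \sigma(A)$, and let $\mathbf{w}$ be a corresponding eigenvector. Since $G$ is connected, $A$ is irreducible and nonnegative, so Perron-Frobenius gives that $\lambda_1$ is simple with a strictly positive eigenvector $\mathbf{v}$. Then consider the componentwise absolute value $|\mathbf{w}|$: the triangle inequality yields $A|\mathbf{w}| \geq |A\mathbf{w}| = \lambda_1 |\mathbf{w}|$ entrywise, and since $\lambda_1$ is the spectral radius, this must be an equality $A|\mathbf{w}| = \lambda_1 |\mathbf{w}|$. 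By the simplicity of $\lambda_1$, $|\mathbf{w}|$ is a positive scalar multiple of $\mathbf{v}$, hence positive everywhere, so no entry of $\mathbf{w}$ is zero. Now partition $V$ according to $V_+ = \{i : w_i > 0\}$ and $V_- = \{i : w_i < 0\}$. The equality case of the triangle inequality at each vertex forces all neighbors of a vertex in $V_+$ to lie in $V_-$ and vice versa, giving the desired bipartition.

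The main obstacle is the reverse direction of (ii): the trace argument used for (i) doesn't pin down the existence of $-\lambda_1$ specifically, so one has to extract bipartiteness from a single eigenvector. Perron-Frobenius is the right tool, but the step that requires care is converting the equality $A|\mathbf{w}| = \lambda_1 |\mathbf{w}|$ into a combinatorial statement about the sign pattern of $\mathbf{w}$; the key observation is that for each vertex $i$, $(A|\mathbf{w}|)_i = \sum_{j \sim i} |w_j| = |\sum_{j \sim i} w_j|$ forces all $w_j$ with $j \sim i$ to have the same sign, which is precisely the bipartite condition.
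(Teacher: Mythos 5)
This theorem is quoted in the paper from the literature (Brouwer and Haemers) and no proof is given there, so there is nothing internal to compare against; your argument stands on its own. It is correct and is the standard proof. The forward direction of (i) via the block form $\begin{bmatrix}0&M\\M^T&0\end{bmatrix}$ and the sign-flip involution is right, the trace identity $\mathrm{tr}(A^{2k+1})=\sum_i\lambda_i^{2k+1}=0$ combined with nonnegativity of closed-walk counts correctly rules out odd cycles for the converse, and your Perron--Frobenius treatment of (ii) is sound: the subinvariance inequality $A|\mathbf{w}|\geq\lambda_1|\mathbf{w}|$ forces equality (pair against the positive left eigenvector), simplicity of $\lambda_1$ gives $|\mathbf{w}|>0$, and the equality case of the triangle inequality at each vertex, together with $\sum_{j\sim i}w_j=-\lambda_1 w_i$, shows every neighbor of $i$ carries the sign opposite to $w_i$, yielding the bipartition. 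The only point worth a sentence in a careful write-up is the degenerate case $\lambda_1=0$ (an edgeless or single-vertex component), where the sign argument needs $\lambda_1>0$; this is trivially handled since such a graph is bipartite by convention.
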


The spectrum of the non-backtracking matrix $B$ also can indicate whether a graph is bipartite in the same way.
Additionally, the same properties hold for the spectrum of $K$ when $G$ is bipartite.

\begin{theorem}
Let $G$ be a connected graph, $B$ its associated non-backtracking matrix, and $K$ defined as above. Then the following are equivalent:
\begin{enumerate}[(i)]
\item $G$ is a bipartite graph,
\item $\sigma(K)$ is symmetric,
\item $\sigma(B)$ is symmetric,
\item $\lambda_n=-\lambda_1$ where $\lambda_i\in\sigma(K)$, and
\item $\mu_n=-\mu_1$ where $\mu_i\in\sigma(B)$,
\item $-1\in\sigma(K)$.
\end{enumerate}
\label{b-bipartite}
\end{theorem}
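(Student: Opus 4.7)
The plan is to prove the cycle of implications $(i) \Rightarrow (ii) \Rightarrow (iv) \Rightarrow (v) \Rightarrow (iii) \Rightarrow (ii) \Rightarrow (vi) \Rightarrow (i)$, built around three substantive ingredients. The first is an explicit similarity establishing $(i) \Rightarrow (ii)$: given a bipartition $V = V_1 \cup V_2$, let $P = \mathrm{diag}(p_1,\ldots,p_n)$ with $p_i = +1$ on $V_1$ and $p_i = -1$ on $V_2$, so that $PAP = -A$ while $PDP = D$. Setting $P' := \mathrm{diag}(P,-P)$, one has $P'^2 = I$ and a direct block computation yields $P'KP' = -K$, giving $\sigma(K) = -\sigma(K)$. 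The equivalence $(ii) \Leftrightarrow (iii)$ then follows at once from Ihara's theorem: the factorization $\chi_B(\mu) = (\mu^2-1)^{m-n}\chi_K(\mu)$ has an even prefactor, so symmetry of $\sigma(B)$ under negation is equivalent to symmetry of $\sigma(K)$.

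The second ingredient identifies $-1 \in \sigma(K)$ with the signless Laplacian kernel, giving $(vi) \Rightarrow (i)$. By Proposition~\ref{thm:props-of-k}(i), any eigenvector of $K$ for $\mu = -1$ has the form $[\mathbf{y},\mathbf{y}]^T$; plugging into $K\mathbf{v} = -\mathbf{v}$ collapses to the single nontrivial equation $(A+D)\mathbf{y} = 0$. So $\mathbf{y}$ is a nonzero vector in the kernel of the signless Laplacian $Q = A+D$, and the well-known fact that $Q$ has nontrivial kernel iff the connected graph $G$ is bipartite delivers the implication. The third and most delicate ingredient closes the loop via $(v) \Rightarrow (iii)$ using Perron-Frobenius: when $G$ is neither a tree nor a cycle and $d_{\min} \geq 2$, Proposition~\ref{thm:irreducible} makes $B$ nonnegative and irreducible. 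If $-\rho(B) \in \sigma(B)$, then $-\rho(B)$ sits on the peripheral spectrum, forcing the imprimitivity index $h$ to be even; Perron-Frobenius then yields invariance of $\sigma(B)$ under multiplication by $e^{2\pi i/h}$, and the $(h/2)$-th power of this rotation is $-1$, so $\sigma(B) = -\sigma(B)$.

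The remaining links are mechanical: $(ii) \Rightarrow (iv)$ and $(iii) \Rightarrow (v)$ follow from how symmetry acts on the extremal eigenvalues $\lambda_1 = \rho(K)$ and $\mu_1 = \rho(B)$; $(iv) \Rightarrow (v)$ uses $\sigma(K) \subset \sigma(B)$ together with $\rho(K) = \rho(B)$, which holds because Proposition~\ref{thm:props-of-k}(ii) forces $\rho(K) \geq 1$ while Theorem~\ref{thm:almost-similarity} shows the only eigenvalues of $B$ outside $\sigma(K)$ are $\pm 1$; and $(ii) \Rightarrow (vi)$ follows from $1 \in \sigma(K)$ combined with symmetry of $\sigma(K)$. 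The edge cases of trees and cycles are verified directly via Theorems~\ref{thm:b-tree} and~\ref{thm:cycle-spectrum}, while graphs with degree-$1$ vertices reduce to their $2$-cores via Corollary~\ref{thm:adding-trees} without disturbing bipartiteness or the non-zero part of the non-backtracking spectrum. The main obstacle will be making the Perron-Frobenius step $(v) \Rightarrow (iii)$ fully rigorous and verifying that the reduction to an irreducible $B$ preserves every one of conditions $(i)$ through $(vi)$.
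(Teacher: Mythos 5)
Your proposal is correct, and while it leans on the same three pillars as the paper (Ihara's theorem to pass between $\sigma(B)$ and $\sigma(K)$, Perron--Frobenius for the peripheral spectrum, and a bipartite block structure), it routes the implications differently and replaces two key arguments. For $(i)\Rightarrow(ii)$ you conjugate $K$ itself by the signature matrix $P'=\mathrm{diag}(P,-P)$ to get $P'KP'=-K$; the paper instead builds an explicit block anti-diagonal form of $B$ out of the factors $S,T,\tau$ and manipulates eigenvectors, which is longer and (as labeled) actually proves symmetry of $\sigma(B)$ first. Your similarity argument is cleaner, lands directly on the statement about $K$, and automatically preserves multiplicities. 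For closing the loop from $(v)$, the paper argues $(v)\Rightarrow(i)$ by deducing that the period of the irreducible matrix $B$ is even, hence every directed cycle has even length, hence $\hat G$ and then $G$ are bipartite; you instead invoke the full Frobenius rotational invariance of the spectrum to get $(v)\Rightarrow(iii)$ and then return to $(i)$ through the signless Laplacian kernel in $(vi)\Rightarrow(i)$ (the paper's version of that last step, via $D^{-1}A$ having eigenvalue $-1$, is essentially the same computation). Your route avoids the paper's somewhat informal ``directed bipartite implies $G$ bipartite'' step at the cost of quoting a stronger form of Perron--Frobenius. One caution on your closing remark: the $2$-core reduction via Corollary \ref{thm:adding-trees} controls $\sigma(B)$ but not $\sigma(K)$ (deleting leaves changes $K$, whose nullity counts degree-$1$ vertices by Proposition \ref{thm:props-of-k}(iii)); fortunately every step in your cycle that touches $K$ --- the similarity, the Ihara transfer, and the signless Laplacian argument --- is valid for arbitrary connected graphs, so the reduction is genuinely needed only for the Perron--Frobenius step, exactly where the paper needs it too.
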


\begin{proof}

\emph{$(i)\rightarrow(ii)$}: Assume that $G$ is bipartite. If $G$ is bipartite, then the adjacency matrix $A$ can be written as $\begin{bmatrix}0&A_2\\A_1&0\end{bmatrix}$ (see \cite{brouwer2011spectra}). We know that $A=TS$ by equation \ref{eqn:s-t-identities}. Thus for a bipartite graph, we define the following matrices:
\begin{align*}
T_1\colon=\begin{cases}1&i_1\mapsto(i_1,i_2)\\0&\text{otherwise}\end{cases},&&
T_2\colon=\begin{cases}1&i_2\mapsto(i_2,i_1)\\0&\text{otherwise}\end{cases},&&
S_1\colon=\begin{cases}1&(i_2,i_1)\mapsto i_1\\0&\text{otherwise}\end{cases},&&
S_2\colon=\begin{cases}1&(i_1,i_2)\mapsto i_2\\0&\text{otherwise}\end{cases}.
\end{align*}
In these matrices, $i_j$ represents a vertex in partition $j$ and $(i_j,i_k)$ represents an edge from partition $j$ to partition $k$.
Thus by simple computation, we see that $A=\begin{bmatrix}0&T_1S_2\\T_2S_1&0\end{bmatrix}$ and the matrix $C=\begin{bmatrix}0&S_2T_2\\S_1T_1&0\end{bmatrix}$ where $C$ is defined as in equation \ref{eqn:c-matrix}.
Hence, the edges are also divided into two edge partitions.

In order to compute $B$, we also define a matrix
\begin{align*}
\tau_1\colon=\begin{cases}1&(w,x)_1\mapsto(y,z)_2\\0&\text{otherwise}\end{cases}\text{ and }&
\tau_2\colon=\begin{cases}1&(w,x)_2\mapsto(y,z)_1\\0&\text{otherwise}\end{cases},
\end{align*}
where $(w,x)_j$ represents an edge in edge partition $j$. We then see that $\tau=\begin{bmatrix}0&\tau_2\\\tau_1&0\end{bmatrix}$.
So the matrix $B=\begin{bmatrix}0&S_2T_2-\tau_2\\S_1T_1-\tau_1&0\end{bmatrix}$.
Defining $B_j=S_jT_j-\tau_j$, we get that $B=\begin{bmatrix}0&B_2\\B_1&0\end{bmatrix}$.

Let $\begin{bmatrix}\mathbf{x}&\mathbf{y}\end{bmatrix}^T$ be an eigenvector of $B$ with corresponding eigenvalue $\mu$. Then
\begin{align*}
B\begin{bmatrix}\mathbf{x}&\mathbf{y}\end{bmatrix}^T&=\mu\begin{bmatrix}\mathbf{x}&\mathbf{y}\end{bmatrix}^T\\
\begin{bmatrix}B_2\mathbf{y}&B_1\mathbf{x}\end{bmatrix}^T&=\mu\begin{bmatrix}\mathbf{x}&\mathbf{y}\end{bmatrix}^T.
\end{align*}
Consider the vector $\begin{bmatrix}\mathbf{x}&-\mathbf{y}\end{bmatrix}^T$. We see that
\begin{align*}
B\begin{bmatrix}\mathbf{x}&-\mathbf{y}\end{bmatrix}^T&=\begin{bmatrix}-B_2\mathbf{y}&B_1\mathbf{x}\end{bmatrix}^T
=\begin{bmatrix}-\mu \mathbf{x}&\mu \mathbf{y}\end{bmatrix}^T
=-\mu\begin{bmatrix}\mathbf{x}&-\mathbf{y}\end{bmatrix}^T.
\end{align*}
So $-\mu$ is an eigenvalue of $B$ with eigenvector $\begin{bmatrix}\mathbf{x}&-\mathbf{y}\end{bmatrix}^T$. Hence the spectrum of $B$ is symmetric around 0.

\emph{$(ii)\rightarrow(iii)$}:  Recall that $\sigma(B)=\sigma(K)\cup\{\pm 1\}$. Since the set of $\pm 1$ is symmetric by Ihara's theorem, $\sigma(K)$ is symmetric.


\emph{$(iii)\rightarrow(iv)$}: Assume $\sigma(K)$ is symmetric. Then $\lambda_n=-\lambda_1$ where $\lambda_1$ is the spectral radius of $\sigma(K)$.

\emph{$(iv)\rightarrow(v)$}: If $G$ is a tree, then the result follows trivially from Theorem \ref{thm:b-tree}.

If $G$ is not a tree then $\sigma(K)\subseteq\sigma(B)$. Since $1\in\sigma(K)$, we know that $\rho(K)\geq 1$. Thus, $\rho(B)\geq 1$. If $\rho(B)=1$, then the dominant eigenvalue of $B$ must be $\mu_1=\pm 1$. From Ihara's Theorem, $\pm 1\in\sigma(B)$. 

If $\rho(B)\neq 1$, then $\mu_1\in\sigma(K)$ by Ihara's Theorem and $\mu_1=\lambda_1$. Thus, $\lambda_n=-\mu_1$. Then, we know that $\mu_n=-\mu_1$.

\emph{$(v)\rightarrow(i)$}: If $G$ is tree or cycle, the result holds from Theorems \ref{thm:b-tree} and \ref{thm:cycle-spectrum} respectively.
Assume that $G$ is not a tree or cycle and $\mu_n=-\mu_1$.
Then $|\mu_n|=|\mu_1|$.
By Proposition \ref{thm:irreducible} tells us $B$ is irreducible. Then from the Perron-Frobenius theorem, we know that the period $d$ of $B$ must be at least 2. Further, since $\mu_n=-\mu_1$, $d=2k$ for some $k\in\{1,2,3,...\}$. Then every cycle must be of even length, so $B$ is the adjacency matrix of a directed bipartite graph. So $B=\begin{bmatrix}0&B_2\\B_1&0\end{bmatrix}$, implying that $A=\begin{bmatrix}0&A_2\\A_1&0\end{bmatrix}$. Hence $G$ is bipartite.

\emph{$(iii)\rightarrow(vi)$:} Recall that $1\in\sigma(K)$ from Theorem \ref{thm:props-of-k}(ii). Since $\sigma(K)$ is symmetric, $-1\in\sigma(K)$.

\emph{$(vi)\rightarrow(i)$:} Since $-1\in\sigma(K)$, we know that $\begin{pmatrix}\mathbf{y}&\mathbf{y}\end{pmatrix}$ is the associated eigenvector by Theorem \ref{thm:props-of-k}(i).
Thus, $A\mathbf{y}+(D-I)\mathbf{y}=-\mathbf{y}$ implying that $D^{-1}A\mathbf{y}=-\mathbf{y}$.
So $-1\in\sigma(D^{-1}A)$.
Note that $D^{-1}A$ is the transition probability matrix of $G$ with spectral radius 1.
Thus, $G$ must be bipartite.
\end{proof}

\begin{remark}
An alternate proof of Theorem \ref{b-bipartite}(vi) can be derived directly as a corollary of Proposition 4.15 of \cite{torres2020non}.
\end{remark}

Recall in Theorem \ref{thm:b-tree} we calculated $\sigma(B)$ when $G$ is a tree.
Since $\sigma(K)\supset\sigma(B)$ in the case of a tree, we can use the properties of bipartite graphs to classify $\sigma(K)$ for trees as well.

\begin{corollary}
Let $G$ be a tree. Let $K$ be defined as above. Then $1,-1,0\in\sigma(K)$. The eigenvalue 0 has multiplicity at least $r$, where $r$ is the number of leaves in $G$.
\label{k-tree}
\end{corollary}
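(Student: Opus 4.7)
The plan is to derive all three claims directly from results already established earlier in the paper, so the proof amounts to assembling the right citations.

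First I would handle the eigenvalue $1$. Since a tree is connected, Proposition \ref{thm:props-of-k}(ii) immediately gives $1 \in \sigma(K)$ with geometric multiplicity equal to the number of connected components, namely one. Next, I would obtain $-1$ from bipartiteness: every tree is bipartite and connected, so by the equivalence of (i) and (vi) in Theorem \ref{b-bipartite}, we conclude $-1 \in \sigma(K)$.

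For the multiplicity-of-$0$ claim, I would invoke Proposition \ref{thm:props-of-k}(iii), which says that the nullity of $K$ equals the number of degree-$1$ vertices of $G$. In a tree, the degree-$1$ vertices are exactly the leaves, so the geometric multiplicity of $0$ as an eigenvalue of $K$ is $r$, and therefore the algebraic multiplicity is at least $r$.

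The only subtlety to flag is making sure the three eigenvalues are actually \emph{in} $\sigma(K)$ (not just derivable formally from the formulas), and the bulleted tools above handle that. There is no serious obstacle: the hardest step is simply recognizing that Theorem \ref{b-bipartite}(vi) applies (because the corollary's hypothesis — being a tree — implies both connectedness and bipartiteness).
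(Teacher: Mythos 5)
Your proof is correct and takes essentially the route the paper intends (the paper itself omits an explicit proof of this corollary): $1$ comes from Proposition~\ref{thm:props-of-k}(ii) via connectedness, $-1$ from bipartiteness via Theorem~\ref{b-bipartite}, and the multiplicity of $0$ from Proposition~\ref{thm:props-of-k}(iii), since the degree-$1$ vertices of a tree are exactly its leaves. The only caveat, inherited from the statement itself, is that one must implicitly assume the tree has at least one edge, as a single-vertex tree has $r=0$ and $0\notin\sigma(K)$.
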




\bibliographystyle{plain}
\bibliography{ref}

\end{document}